\DeclareMathAlphabet{\mathpzc}{OT1}{pzc}{m}{it}
\DeclareRobustCommand{\unEdge}{%
	\mathrel{%
		\text{%
			\ooalign{$\rightfootline$\cr\reflectbox{$\rightfootline$}\cr}%
		}%
	}%
}
\newcommand{\bSigma}{\boldsymbol \Sigma}
\begin{document}

\newtheorem{theorem}{Theorem}[section]
\newtheorem{proposition}[theorem]{Proposition}
\newtheorem{lemma}[theorem]{Lemma}
\newtheorem{definition}[theorem]{Definition}
\newtheorem{corollary}[theorem]{Corollary}

\theoremstyle{definition}
\newtheorem{ex}[theorem]{Example}
\newtheorem{rem}[theorem]{Remark}

\title[A Trek Rule for the Lyapunov Equation]{A Trek Rule for the Lyapunov Equation}

\author[N. R. Hansen]{Niels Richard Hansen}

\address{Department of Mathematical Sciences, University
of Copenhagen Universitetsparken 5, 2100 Copenhagen \O, Denmark,
Niels.R.Hansen@math.ku.dk}

\keywords{Gaussian Markov processes, graphical models, Lyapunov equation, treks}
    
\subjclass{62R01, 62A09, 15A24}

\maketitle

\begin{abstract}

The Lyapunov equation is a linear matrix equation characterizing the cross-sectional steady-state covariance matrix of a Gaussian Markov process. We show a new version of the trek rule for this equation, which links the graphical structure of the drift of the process to the entries of the steady-state covariance matrix. In general, the trek rule is a power series expansion of the covariance matrix in the entries of the drift and volatility matrices. For acyclic models it simplifies to a polynomial in the off-diagonal entries of the drift matrix. Using the trek rule we can give relatively explicit formulas for the entries of the covariance matrix for some special cases of the drift matrix. These results illustrate notable differences between covariance models entailed by the Lyapunov equation and those entailed by linear additive noise models. To further explore differences and similarities between these two model classes, we use the trek rule to derive a new lower bound on the marginal variances in the acyclic case. This sheds light on the phenomenon, well known for the linear additive noise model, that the variances in the acyclic case tend to increase along a topological ordering of the variables.

\end{abstract}

\section{Introduction}

With $M$ any $d \times d$ matrix and $C$ a $d \times d$ 
positive semidefinite matrix, we can define a Gaussian Markov process $(X_t)_{t \geq 0}$ 
on $\mathbb{R}^d$ as a solution to the stochastic differential equation
\begin{equation} \label{eq:SDE}
\mathrm{d}X_t = MX_t \mathrm{d}t + C^{1/2} \mathrm{d}W_t,
\end{equation}
where $W_t$ is a $d$-dimensional Brownian motion, see, e.g.,  
\cite{Jacobsen:1993} or Section 3.7 in \cite{Pavliotis:2014}.
We call $M$ the 
\emph{drift matrix} and $C$ is called the \emph{diffusion matrix} or the 
\emph{volatility matrix}.

The Markov process has 
a steady-state distribution if and only if the matrix $M$ is stable, 
that is, if and only if all eigenvalues of $M$ have strictly negative
real parts. In this case, the steady-state distribution is Gaussian with mean
$0$ and covariance matrix $\Sigma$, which is the unique
solution to the (continuous) Lyapunov equation
\begin{equation} \label{eq:Lyapunov}
M\Sigma + \Sigma M^T + C = 0,
\end{equation}
see, e.g., \cite[Theorem 2.12]{Jacobsen:1993}.

When the process is stationary, that is, when it is started in its (unique)
Gaussian steady-state distribution $\mathcal{N}(0, \Sigma)$, with $\Sigma$
solving \eqref{eq:Lyapunov}, then the cross-sectional distribution of $X_t$ is
$\mathcal{N}(0, \Sigma)$ at any time $t$. Questions regarding estimation and
identification of $M$ and/or $C$ from cross-sectional observations of the
process were treated in \cite{varando2020} and \cite{Dettling:2023}.

The main contribution of this paper is Theorem \ref{thm:altrep}, which gives a
novel representation of $\Sigma$ in terms of the drift and volatility matrices
$M$ and $C$.  The non-zero entries 
 of $M$ and $C$ define a mixed graph, see Section \ref{sec:graph}, 
 which is used in Section \ref{sec:treks} to define a collection of treks between any two nodes. These are special walks in the graph, 
 and we give two versions of the trek rule in 
 Proposition \ref{prop:trek} and Theorem \ref{thm:altrep}, respectively, that 
 express
 the entries of the solution $\Sigma$ to \eqref{eq:Lyapunov} as a sum 
 of trek weights over all treks. 
 
 Trek rules are well known for linear additive noise models, also known as 
 linear structural equation models, see \cite{sullivant:2010} and
 Section 4 in \cite{drton2018}. A first version of the trek rule for the Lyapunov equation
 was presented by \cite{varando2020}, but our new version in Theorem \ref{thm:altrep}
 is more explicit and 
 easier to use an interpret.  A special case of our general trek rule, valid
 for certain acyclic models, was recently presented as Proposition 4.3 in
 \cite{boege2024}, where it was used to characterize the conditional
 independencies that hold in the steady-state distribution. We treat the 
 acyclic case in detail in Section \ref{sec:acyclic}, where we 
 use of the trek rule to derive a novel lower bound
 on the marginal variances. 

\subsection{The Lyapunov equation} The Lyapunov equation 
\eqref{eq:Lyapunov} is a linear matrix equation. Using the Kronecker 
product, we can rewrite it as   
\begin{equation} \label{eq:LyapunovKron}
(M \otimes I + I \otimes M)\mathrm{vec}(\Sigma) = -\mathrm{vec}(C),
\end{equation}
where $I$ is the $d \times d$ identity matrix and $\mathrm{vec}(A)$
denotes the vectorization of the matrix $A$. When $M$ is stable, 
the $d^2 \times d^2$ matrix $(M \otimes I + I \otimes M)$ is invertible, 
and the unique solution to \eqref{eq:LyapunovKron} is given by
\begin{equation} \label{eq:LyapunovSol}
\mathrm{vec}(\Sigma) = -(M \otimes I + I \otimes M)^{-1}\mathrm{vec}(C).
\end{equation}
The representation \eqref{eq:LyapunovSol}  shows 
that $\Sigma_{ij}$ is a rational function of the entries of $M$ and $C$, 
but \eqref{eq:LyapunovSol}  is not very explicit, nor is it efficient for numerical computation 
of the solution. There is a vast literature on numerical methods for 
solving the Lyapunov equation efficiently, see, e.g., \cite{Simoncini:2016} 
for a review. See also \cite{Gajic:1995} for a comprehensive treatment of the
Lyapunov equation and its applications.

We will not be particularly concerned with numerical 
methods but rather with giving a new representation of
the solution that is interpretable in terms of the graphical structure
of the drift matrix $M$. To this end, we will use the well known 
integral representation of the solution given by 
\begin{equation} \label{eq:intrep}
    \Sigma = \int_0^{\infty} e^{tM} C
    e^{tM^T} \mathrm{d} t.
\end{equation}
Here, $e^{tM}$ denotes the matrix exponential of $tM$, and the 
integral is understood as a matrix integral. It is convergent when
$M$ is stable. See \cite{varando2020} and the references therein for
further details on the integral representation of the solution.

\subsection{Graphs} \label{sec:graph}
We will represent the non-zero entries of the drift matrix
$M$ and the volatility matrix $C$ by a mixed graph $\mathcal{G}$ with nodes $[d]
= \{1, \ldots, d\}$, directed edges, $\rightarrow$, and blunt edges, $\unEdge$.
The blunt edges, introduced in \cite{varando2020} and \cite{Mogensen:2022}, will be
used to represent the covariance structure of the noise process. In the
literature on linear additive noise models, bidirected edges, rather than blunt
edges, are conventionally used to represent the covariance structure of the
noise. The primary reason for using a different notation is that a
bidirected edge is also used to represent a common latent factor, and in the
context of stochastic processes, a latent factor cannot in general be
captured by a correlated noise process. See \cite{Mogensen:2022} for further
details. 

\begin{definition} \label{dfn:graph}
    A pair of matrices $(M, C)$ is
    compatible with a mixed graph $\mathcal{G}$ if
    $m_{ji} \neq 0$ implies $i \rightarrow j$ and $c_{ij} \neq 0$ implies
    $i \unEdge j$.
\end{definition}

\begin{ex} \label{ex:graph0} We consider the specific model with $d = 5$, 
    and using $.$ to denote zero entries, 
  $$M = \left(\begin{array}{rrrrr}
  -1 & 0.5 & . & 0.2 & . \\ 
    -1 & -1 & 0.2 & . & . \\ 
    . & . & -1 & 0.5 & . \\ 
    . & . & . & -1 & 1 \\ 
    . & . & 1 & . & -1 \\ 
  \end{array}\right) \quad \text{and} \quad C = \left(\begin{array}{rrrrr}
    1 & . & . & . & . \\ 
    . & 1 & . & . & . \\ 
    . & . & 1 & . & . \\ 
    . & . & . & 1 & . \\ 
    . & . & . & . & 1 \\ 
  \end{array}\right).$$

This pair $(M, C)$ is compatible with the graph $\mathcal{G} = ([5],
E)$ as given by (A) in Figure \ref{fig:graph0}. The eigenvalues of $M$ are 
\[ 
    -0.206,  -1 \pm 0.707i, \text{ and } -1.397 \pm 0.687i
\]
  with all real parts strictly negative, and $M$ is thus stable. Solving the 
  Lyapunov equation gives the steady-state covariance matrix (rounded to 3 decimals)
  
\[
       \Sigma = \left(\begin{array}{rrrrr}
        0.496 & -0.091 & 0.123 & 0.207 & 0.151 \\
        -0.091 & 0.594 & 0.013 & -0.038 & -0.005 \\
        0.123 & 0.013 & 0.838 & 0.676 & 0.647 \\
        0.207 & -0.038 & 0.676 & 1.412 & 0.912 \\
        0.151 & -0.005 & 0.647 & 0.912 & 1.147 \\
    \end{array}\right).
\]  

%   \begin{center}
%   $\Sigma = \left(\begin{array}{rrrrr}
%   1.26 & 0.64 & 0.60 & 0.60 & 0.70 \\ 
%     0.64 & 1.77 & 0.70 & 0.03 & 0.43 \\ 
%     0.60 & 0.70 & 3.08 & 1.00 & 2.17 \\ 
%     0.60 & 0.03 & 1.00 & 2.33 & 1.83 \\ 
%     0.70 & 0.43 & 2.17 & 1.83 & 2.67 \\ 
%   \end{array}\right).$
%   \end{center}
\end{ex}

\begin{figure}
    \centering
    \begin{tikzpicture}
    \tikzset{vertex/.style = {shape=circle,draw,minimum size=1.5em, inner sep = 0pt, outer sep = 1pt}}
    \tikzset{vertexdot/.style = {shape=circle,fill=red,color=red,draw,minimum size=0.5em, inner sep = 0pt}}
    \tikzset{vertexhid/.style = {shape=rectangle,draw,minimum size=1.5em, inner sep = 0pt}}
    \tikzset{edge/.style = {->,> = latex', thick}}
    \tikzset{edgeun/.style = {-, thick}}
    \tikzset{edgebi/.style = {<->,> = latex', thick}}
    \tikzset{edgeblunt/.style = {{|[scale=0.5]}-{|[scale=0.5]}, thick}}

    \node at (0, 2.5) {(A)};
    \node at (5, 2.5) {(B)};
    
    \node[vertex] (A) at (-1.2, 1.2) {$1$};
    \node[vertex] (B) at (1.2, 1.2) {$2$};
    \node[vertex] (C) at (1.2, -1.2) {$3$};
    \node[vertex] (D) at (-1.2, -1.2) {$5$};
    \node[vertex] (E) at (0, 0) {$4$};
    
    \draw[edge] (A) edge[loop left, color=blue] node[left, color=black, scale=0.7] {$-1$} (A);
    \draw[edge] (A) edge[bend right  = 20, color=blue] node[above, color=black, scale=0.7] {$-1$} (B);
    \draw[edge] (B) edge[loop right, color=blue] node[right, color=black, scale=0.7] {$-1$} (B);
    \draw[edge] (B) edge[bend right  = 20, color=blue] node[above, color=black, scale=0.7] {$0.5$} (A);
    \draw[edge] (C) edge[loop right, color=blue] node[right, color=black, scale=0.7] {$-1$} (C);
    \draw[edge] (C) edge[bend right  = 15, color=blue] node[right, color=black, scale=0.7] {$0.2$} (B);
    \draw[edge] (C) edge[bend left  = 15, color=blue]  node[above, color=black, scale=0.7] {$1$} (D);
    \draw[edge] (E) edge[bend left = 15, color=blue]  node[left, pos=0.3, color=black, scale=0.7] {$0.2$ \ } (A);
    \draw[edge] (E) edge[bend left  = 15, color=blue]  node[right, pos=0.3, color=black, scale=0.7] {\ $0.5$} (C);
    \draw[edge] (E) edge[loop above, color=blue] node[right, pos=0.7, color=black, scale=0.7] {$-1$} (E);
    \draw[edge] (D) edge[bend left = 15, color=blue]   node[left, pos=0.7, color=black, scale=0.7] {$1$ \ } (E);
    \draw[edge] (D) edge[loop left, color=blue] node[left, color=black, scale=0.7] {$-1$} (D);
    
    \draw[edgeblunt, loop above, color=red] (A) to (A);
    \draw[edgeblunt, loop above, color=red] (B) to (B);
    \draw[edgeblunt, loop below, color=red] (C) to (C);
    \draw[edgeblunt, loop below, color=red] (D) to (D);
    \draw[edgeblunt, loop below, color=red] (E) to (E);
    
    \node[vertex] (A1) at (-1.2+5, 1.2) {$1$};
    \node[vertex] (B1) at (1.2+5, 1.2) {$2$};
    \node[vertex] (C1) at (1.2+5, -1.2) {$3$};
    \node[vertex] (E1) at (5, 0) {$4$};
    \node[vertex] (D1) at (-1.2+5, -1.2) {$5$};
    
    \draw[edge] (A1) edge[bend right  = 20, color=blue] node[above, color=black, scale=0.7] {$-1$} (B1);
    \draw[edge] (B1) edge[bend right  = 20, color=blue] node[above, color=black, scale=0.7] {$0.5$} (A1);
    \draw[edge] (C1) edge[bend right  = 15, color=blue] node[right, color=black, scale=0.7] {$0.2$} (B1);
    \draw[edge] (C1) edge[bend left  = 15, color=blue]  node[above, color=black, scale=0.7] {$1$} (D1);
    \draw[edge] (E1) edge[bend left = 15, color=blue]  node[left, pos=0.3, color=black, scale=0.7] {$0.2$ \ } (A1);
    \draw[edge] (E1) edge[bend left  = 15, color=blue]  node[right, pos=0.3, color=black, scale=0.7] {\ $0.5$} (C1);
    \draw[edge] (D1) edge[bend left = 15, color=blue]   node[left, pos=0.7, color=black, scale=0.7] {$1$ \ } (E1);

    \draw[edgeblunt, loop above, color=red] (A1) to (A1);
    \draw[edgeblunt, loop above, color=red] (B1) to (B1);
    \draw[edgeblunt, loop below, color=red] (C1) to (C1);
    \draw[edgeblunt, loop below, color=red] (D1) to (D1);
    \draw[edgeblunt, loop below, color=red] (E1) to (E1);
    
    \end{tikzpicture} 
    \caption{Mixed graph (A) representing a model with $d = 5$
      variables and with a diagonal $C$-matrix. 
      The edge labels of the directed (blue) edges 
      are the values of the non-zero entries in the $M$-matrix,
        while all the blunt (red) edges correspond to the 
        diagonal entries of $C$ all being $1$. 
        The mixed graph (B) is the same as (A) but with 
        all directed self-loops removed. We call (B) the base 
        graph of (A).
        \label{fig:graph0}}
\end{figure}
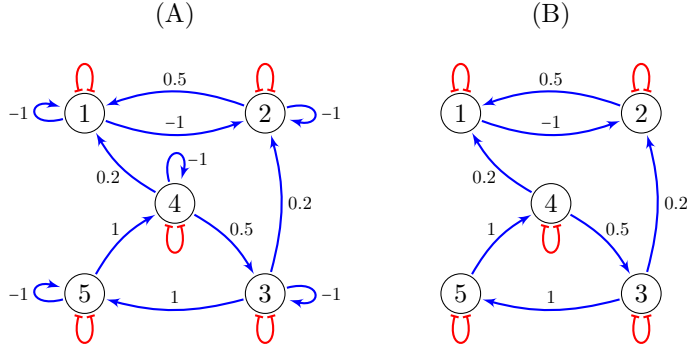
    
We use standard graph terminology, see, e.g., \cite{Maathuis:2019}. 
Specifically we let a walk be a sequence of (not necessarily unique) 
nodes where each node is connected
to the next by an edge. A walk from $i$ to $j$ is directed if 
all edges are directed toward $j$. For example, in Figure \ref{fig:graph0} (A),
the walk $1 \leftarrow 4 \unEdge 4  \rightarrow 3 \rightarrow 5 \rightarrow 4$ 
is a walk from $1$ to $4$, and $3 \rightarrow 5 \rightarrow 5 \rightarrow 4 \rightarrow 1$ 
is a directed walk from $3$ to $1$.

We will by convention assume that a mixed graph includes all directed
self-loops unless otherwise stated. It will, however, also be convenient to have
a version of the graph with all directed self-loops removed, which we call the
\emph{base graph}.

\begin{definition}
    The base graph $\mathcal{G}_0$ of a mixed graph $\mathcal{G}$ is 
    obtained from $\mathcal{G}$ by removing all directed self-loops.
\end{definition}

In Figure \ref{fig:graph0} (B) we have the base graph of the mixed graph (A). A
walk in the base graph is called a base walk. The walk $3 \rightarrow 5
\rightarrow 5 \rightarrow 4 \rightarrow 1$ in Figure \ref{fig:graph0} (A) is not
a base walk due to the self-loop $5 \rightarrow 5$, while $1 \leftarrow 4
\unEdge 4  \rightarrow 3 \rightarrow 5 \rightarrow 4$ is a base walk.

\subsection{Linear additive noise models} It is instructive to compare and 
contrast our results for the Lyapunov equation with the well known results  
for linear additive noise models. In the linear additive noise model, the 
random variable $X \in \mathbb{R}^d$ is given as a solution to the equation
\begin{equation} \label{eq:LAN}
    X = B X + \varepsilon,
\end{equation}
where $B$ is a $d \times d$ matrix with $B_{ii} = 0$ and $\varepsilon$ has mean zero 
and covariance matrix $\Omega$. Consequently, the covariance matrix, 
$\Sigma$, of $X$ solves the equation 
\begin{equation} \label{eq:LANcov}
    (I - B) \Sigma (I - B)^T = \Omega.
\end{equation}
When $I - B$ is invertible, the solution is unique and given by $\Sigma =
(I - B)^{-1} \Omega (I - B)^{-T}$. As the Lyapunov equation, the
equation \eqref{eq:LANcov} is a linear matrix equation, and its solution yields a
parametrization of the covariance matrix in terms of the entries of $B$
and $\Omega$. Zero-constraints on the entries of $B$ and $\Omega$ can,
moreover, be represented by a mixed graph according to Definition
\ref{dfn:graph} just as for the Lyapunov equation. The covariance models 
entailed by the two equations are, however, quite different, though they 
also share some structural similarities.

\section{Treks and trek rules} \label{sec:treks}
The trek rule for the solution of
\eqref{eq:LANcov} is a well known power series representation of the entries
$\Sigma_{ij}$ in the covariance matrix in terms of the entries of $B$ and
$\Omega$. We derive in this section a corresponding trek rule for the solution
of the Lyapunov equation \eqref{eq:Lyapunov}. To this end, we need to define 
treks in a mixed graph.

\begin{definition}
    A trek $\tau$ in a mixed graph $\mathcal{G}$ is a walk of the form
    \begin{equation} \label{eq:trek}
        \tau: \ \underbrace{i \leftarrow \cdots \leftarrow i_1}_{n(\tau)} 
        \leftarrow i_0 \unEdge j_0 \rightarrow \underbrace{j_1
          \rightarrow \cdots \rightarrow j}_{m(\tau)}.
    \end{equation}
    Here $n(\tau)$ and $m(\tau)$ denote the number of nodes to the left
    of $i_0$ and to the right of $j_0$, respectively. Let $l(\tau) = n(\tau) + m(\tau)$.
\end{definition}

The trek given by \eqref{eq:trek} is said to be a trek from $i$ to $j$. 
The pair $(i_0, j_0)$ is the \emph{top} of the trek, 
and if $j_0 = i_0$ we refer to $i_0$ as the top. 
The nodes $i_0, \ldots, i$ and $j_0, \ldots, j$ are the \emph{left-hand}
and \emph{right-hand} sides of the trek, respectively. The top nodes are 
connected by a blunt edge, possibly a blunt self-loop, while all other edges are directed. 
The left-hand side of the trek forms a directed walk from $i_0$ to $i$ with 
$n(\tau) + 1$ nodes, and the right-hand side forms a directed walk from $j_0$ to $j$
with $m(\tau) + 1$ nodes. It is possible for a trek to have $n(\tau) = m(\tau) = 0$,
in which case the trek is just $i_0 \unEdge j_0$, which is possibly a 
blunt self-loop $i_0 \unEdge i_0$.

\begin{definition} Let $(M, C)$ be compatible with a mixed graph $\mathcal{G}$
    and let $\tau$ be a trek in $\mathcal{G}$ with top $(i_0, j_0)$. 
    The weight of the trek $\tau$ is the product of the edge weights
    along the trek, that is, 
    $$\omega(M, C, \tau) = c_{i_0, j_0} 
    \prod_{k \rightarrow l \in \tau} m_{lk}.$$
\end{definition}

The walk $1 \leftarrow 4 \unEdge 4  \rightarrow 3 \rightarrow 5 \rightarrow 4$ 
in Figure \ref{fig:graph0} (A) is an example of a trek $\tau$ from $1$ to $4$ with top $4$ 
and with weight $\omega(M, C, \tau) =  0.2 \cdot 1 \cdot 0.5 \cdot 1 \cdot 1 = 0.1$.

With the definitions above, suppose that $(B, \Omega)$ are compatible with a
mixed graph $\mathcal{G}$ and that $B$ has spectral radius less than $1$, then
the trek rule for the linear additive noise model, with $\Sigma$ the solution of
\eqref{eq:LANcov}, is 
\[
    \Sigma_{ij} = \sum_{\tau \in \mathcal{T}(i,j)} \omega(B, \Omega, \tau),
\]
where $ \mathcal{T}(i,j)$ denotes the set of all treks from $i$ to $j$ in
$\mathcal{G}$. See, e.g., Theorem 4.2 in \cite{drton2018} or Proposition 14.2.13 in
\cite{sullivant2018algebraic}. The proof is a simple application of the Neumann
series expansion of $(I - B)^{-1}$. 

For the Lyapunov equation, the
following variant of a trek rule was obtained as Proposition 2.2 in \cite{varando2020}. 
Suppose that $(M, C)$ is compatible with the mixed graph $\mathcal{G}$, and $M$ is
stable and $C$ is positive semidefinite. For any trek $\tau$ in $\mathcal{G}$
we introduce the monomial 
\[
    \kappa(s, \tau) = \frac{s^{l(\tau) + 1}}{((l(\tau) + 1)n(\tau)!m(\tau)!)}
\]
in the auxiliary variable $s \in \mathbb{R}$, and the solution of the Lyapunov
    equation \eqref{eq:Lyapunov} is then given by
    \begin{equation} \label{eq:trekrep}
    \Sigma_{ij} = \lim_{s \rightarrow \infty} 
    \sum_{\tau \in \mathcal{T}(i,j)} \kappa(s, \tau) \omega(M, C, \tau).
    \end{equation}

The series appearing in \eqref{eq:trekrep} is an infinite sum 
over all treks from $i$ to $j$ of the trek weights multiplied by
the factors $\kappa(s, \tau)$ (not depending on $M$ and $C$). Besides 
these factors, the series is similar to the classical trek rule. However, 
to obtain $\Sigma_{ij}$ we have to take the limit 
$s \rightarrow \infty$ of the resulting sum. This makes the representation
\eqref{eq:trekrep} somewhat clumsy and difficult to use and interpret.

The new trek rule that we give below essentially 
interchanges the summation and limit operation by a translation of 
$M$ by the identity matrix $I$ (and by possibly rescaling $M$). 
Throughout we will let
\begin{equation} \label{eq:Lambda}
    \Lambda = M + I.
\end{equation}
Since the Lyapunov equation is invariant to rescaling, 
we can w.l.o.g. assume that $\Lambda$ has spectral radius strictly smaller than 
$1$ when $M$ is stable. If not, we can always rescale $M$ and $C$ to ensure this
without changing $\Sigma$.

\begin{proposition}  \label{prop:trek}
    Let $(M, C)$ be compatible with a mixed graph $\mathcal{G}$, and let $M$ 
    be stable and $C$ be positive semidefinite. When $\Lambda = M + I$ 
    has spectral radius strictly smaller than $1$, the solution of the Lyapunov
    equation \eqref{eq:Lyapunov} is given by
    \begin{equation} \label{eq:trekrep2}
        \Sigma_{ij} = \sum_{\tau \in \mathcal{T}(i,j)} 
        2^{-l(\tau) - 1}
        {l(\tau) \choose n(\tau)}
        \omega(\Lambda, C, \tau),
    \end{equation}
     where $\mathcal{T}(i,j)$ denotes the set of all treks from $i$ to
    $j$ in $\mathcal{G}$. The convergence of the series \eqref{eq:trekrep2} is absolute.  
\end{proposition}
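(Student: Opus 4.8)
The plan is to turn the integral representation \eqref{eq:intrep} into a norm-convergent power series in $\Lambda$. Using $M = \Lambda - I$ and the fact that $I$ commutes with $\Lambda$, the integrand factors as $e^{tM} C e^{tM^T} = e^{-2t}\, e^{t\Lambda} C\, e^{t\Lambda^T}$. Expanding the two matrix exponentials in their everywhere-convergent power series and (provisionally) interchanging summation and integration gives
\[
\Sigma = \sum_{n,m \ge 0} \frac{1}{n!\,m!}\left(\int_0^\infty e^{-2t} t^{n+m}\,\mathrm{d}t\right)\Lambda^n C (\Lambda^T)^m .
\]
The scalar integral is the Gamma integral $\int_0^\infty e^{-2t}t^{n+m}\,\mathrm{d}t = (n+m)!\,2^{-(n+m)-1}$, and combining it with the prefactor $1/(n!\,m!)$ produces exactly $2^{-(n+m)-1}\binom{n+m}{n}$, so that
\[
\Sigma = \sum_{n,m\ge 0} 2^{-(n+m)-1}\binom{n+m}{n}\,\Lambda^n C (\Lambda^T)^m .
\]

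The second step reads this matrix identity off entrywise and matches it to the trek sum. Writing $(\Lambda^n C (\Lambda^T)^m)_{ij} = \sum_{i_0, j_0}(\Lambda^n)_{i,i_0}\, c_{i_0,j_0}\,(\Lambda^m)_{j,j_0}$ and expanding each matrix power as a sum of products of entries, $(\Lambda^n)_{i,i_0}$ is the sum over directed walks from $i_0$ to $i$ with $n$ edges of $\prod_{k\to l}\Lambda_{lk}$, and likewise $(\Lambda^m)_{j,j_0}$ ranges over directed walks from $j_0$ to $j$ with $m$ edges. Such a left walk, the blunt edge $i_0 \unEdge j_0$ carrying the factor $c_{i_0,j_0}$, and such a right walk assemble into exactly one trek $\tau \in \mathcal{T}(i,j)$ with $n(\tau)=n$, $m(\tau)=m$ and weight $\omega(\Lambda, C, \tau)$ equal to the product of these three contributions. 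Because we expand powers of $\Lambda = M + I$ rather than of $M$, the diagonal entries $\Lambda_{kk} = m_{kk}+1$ enter these products, so the walks may traverse the directed self-loops that $\mathcal{G}$ retains by convention; this is precisely why the statement is phrased with $\Lambda$ and the full graph $\mathcal{G}$ and not the base graph $\mathcal{G}_0$. Summing over $(i_0,j_0)$ and all walks, and then over $n,m$ with $l(\tau) = n+m$, regroups the matrix series into $\sum_{\tau\in\mathcal{T}(i,j)} 2^{-l(\tau)-1}\binom{l(\tau)}{n(\tau)}\omega(\Lambda,C,\tau)$, the asserted formula.

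The main obstacle is the analytic step: justifying the interchange of the double sum and the integral, which simultaneously underpins the convergence claim. I would fix a submultiplicative matrix norm and, for any $\varepsilon > 0$, use the standard estimate $\|\Lambda^n\| \le K_\varepsilon (\rho(\Lambda)+\varepsilon)^n$ coming from the spectral radius. Then
\[
\sum_{n,m\ge 0} \frac{1}{n!\,m!}\int_0^\infty e^{-2t} t^{n+m}\,\|\Lambda^n\|\,\|C\|\,\|\Lambda^m\|\,\mathrm{d}t \le K_\varepsilon^2\,\|C\| \sum_{n,m\ge 0} 2^{-(n+m)-1}\binom{n+m}{n}(\rho(\Lambda)+\varepsilon)^{n+m},
\]
and collapsing the binomial sum by $k = n+m$ turns the right-hand side into $\tfrac12 K_\varepsilon^2\|C\|\sum_{k\ge0}(\rho(\Lambda)+\varepsilon)^k$, a geometric series that converges once $\varepsilon$ is small enough that $\rho(\Lambda)+\varepsilon < 1$. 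By Tonelli this finiteness licenses the interchange and shows the matrix series converges absolutely in norm. For the absolute convergence of the trek series itself one repeats the estimate with the entrywise absolute-value matrix $|\Lambda|$ in place of $\Lambda$, dominating $\sum_\tau 2^{-l(\tau)-1}\binom{l(\tau)}{n(\tau)}|\omega(\Lambda,C,\tau)|$ by the same geometric bound; since for each fixed $(n,m)$ there are only finitely many treks, the passage from the matrix series to the trek sum is a finite regrouping within each length class. Evaluating the elementary Gamma integrals and the finite walk expansions then completes the argument.
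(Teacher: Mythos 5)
Your main line is exactly the paper's proof: the same translation $\Lambda = M+I$ turning \eqref{eq:intrep} into $\int_0^\infty e^{-2t}e^{t\Lambda}Ce^{t\Lambda^T}\,\mathrm{d}t$, the same spectral-radius bound on $\|\Lambda^n\|$ to justify interchanging sum and integral, the same Gamma-integral evaluation producing $2^{-(n+m)-1}\binom{n+m}{n}$, and the same reading of $(\Lambda^n C(\Lambda^m)^T)_{ij}$ as the finite sum of weights of treks with $n(\tau)=n$, $m(\tau)=m$. All of that is correct and yields the identity \eqref{eq:trekrep2} with the treks summed grouped by $(n(\tau),m(\tau))$.

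The genuine gap is your final claim that trek-level absolute convergence follows by ``repeating the estimate with $|\Lambda|$ in place of $\Lambda$, dominating by the same geometric bound.'' That step fails: taking entrywise absolute values can strictly increase the spectral radius, and $\rho(\Lambda)<1$ does not imply $\rho(|\Lambda|)<1$, so there is no analogue of the bound $\||\Lambda|^n\|\le K r^n$ with $r<1$. Concretely, take
\[
\Lambda = \frac{7}{10}\begin{pmatrix} 1 & 1 \\ -1 & 1 \end{pmatrix}, \qquad C = I ,
\]
so that $M=\Lambda - I$ is stable and $\rho(\Lambda)=\tfrac{7}{10}\sqrt{2}<1$, while $\rho(|\Lambda|)=\tfrac{7}{5}>1$. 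The sum of $|\omega(\Lambda,C,\tau)|$ over treks with $n(\tau)=n$, $m(\tau)=m$ equals $\bigl(|\Lambda|^n(|\Lambda|^m)^T\bigr)_{ij}=\tfrac12\,(7/5)^{n+m}$ for $n,m\ge 1$, and multiplying by $2^{-n-m-1}\binom{n+m}{n}$ and summing over $n+m=k$ gives terms of order $(7/5)^k/4$; hence $\sum_\tau 2^{-l(\tau)-1}\binom{l(\tau)}{n(\tau)}|\omega(\Lambda,C,\tau)|=\infty$ even though all hypotheses of the proposition hold. The spectral-radius estimate controls $\|\Lambda^n\|$, i.e.\ the signed entry $(\Lambda^n)_{i i_0}$ after cancellation, not the sum of absolute values of the walk monomials composing it. What your argument (like the paper's, which is equally terse on this point) actually establishes is absolute convergence of the series grouped by $(n(\tau),m(\tau))$, each group being a finite sum of treks; that grouped convergence suffices for the identity, but unconditional trek-by-trek absolute convergence needs an extra hypothesis such as $\rho(|\Lambda|)<1$, which holds automatically, e.g., when $\Lambda$ is entrywise nonnegative.
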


\begin{proof} Using the series expansion of the exponential 
    function we have
    \begin{align}
        \Sigma & = 
        \int_0^{\infty} e^{tM} C e^{tM^T} \mathrm{d} t \nonumber \\
        & = \int_0^{\infty} e^{-2t} e^{t\Lambda} C e^{t\Lambda^T} \mathrm{d} t \nonumber \\
        & = \int_0^{\infty} 
        \sum_{n=0}^{\infty} \sum_{m=0}^{\infty}                           
        \frac{e^{-2t} t^n t^m}{n!m!} \Lambda^n
                                  C (\Lambda^m)^T \mathrm{d} t. \label{eq:intsum}
    \end{align}
    With $\| \cdot\|$ denoting any matrix norm, the assumption that 
    the spectral radius of $\Lambda$ is strictly smaller than $1$, combined with 
    the spectral radius formula, implies the existence of constants $K > 0$ and $r \in [0, 1)$ such that
    \[
        \| \Lambda^n \| \leq K  r^n.
    \]
    This bound implies absolute convergence of \eqref{eq:intsum}, which justifies interchanging the summation and integration. Thus, 
    \begin{align*}
       \Sigma & = 
       \sum_{n=0}^{\infty} \sum_{m=0}^{\infty}  \Lambda^n
       C (\Lambda^m)^T \frac{1}{n!m!} \int_0^{\infty} t^{n + m} e^{-2t}\mathrm{d} t \\
        & =  
        \sum_{n=0}^{\infty} \sum_{m=0}^{\infty}  \Lambda^n
        C (\Lambda^m)^T \frac{2^{- n - m - 1}\Gamma(n + m + 1)}{n!m!}
        \\
        & =  
        \sum_{n=0}^{\infty} \sum_{m=0}^{\infty}  \Lambda^n
        C (\Lambda^m)^T  2^{- n - m - 1} {n + m \choose n}.
        \end{align*}

Since $\mathcal{G}$, by convention, includes all directed self-loops,
$(\Lambda, C)$ is also compatible with $\mathcal{G}$. Now note that 
$$(\Lambda^n C (\Lambda^m)^T)_{ij}$$
is precisely the sum over all trek weights, 
$\omega(\Lambda, C, \tau)$,
for treks $\tau$ from $i$ to $j$ with $n + 1$ nodes on the left-hand side   and 
$m + 1$ nodes on the right-hand side. By the arguments above, the series representation 
of $\Sigma$ is absolutely convergent and the summation order does not matter. 
Therefore, the $ij$-th entry of $\Sigma$ can be written as the trek representation \eqref{eq:trekrep2}.
\end{proof}

The trek weights are monomials in the $M$ and $C$ entries,
and \eqref{eq:trekrep2} provides a (generally infinite) power series representation 
of the covariances in terms of these entries. Some factors in the monomials 
are the diagonal entries $\Lambda_{ii} = m_{ii} + 1$, while the remaining 
factors are off-diagonal entries of $\Lambda$ (that coincide 
with off-diagonal entries of $M$) and entries of $C$. It is possible, 
as we will show, to derive a trek rule where the contributions from the 
the diagonal entries are separated from the off-diagonal entries.

Recall that the mixed graph $\mathcal{G}$ is assumed to include all directed 
self-loops, while the base graph $\mathcal{G}_0$ is
obtained by removing the directed self-loops. A base trek is then a trek in
the base graph $\mathcal{G}_0$, and it is a trek without any self-loops. Any 
trek in $\mathcal{G}$ can be regarded as a base trek combined with 
a total of $\alpha_1, \ldots, \alpha_d \geq 0$ self-loops on the 
left-hand side and $\beta_1, \ldots, \beta_d \geq 0$ self-loops on the 
right-hand side. Here $\alpha_i$ and $\beta_i$ denote the total 
number of self-loops for node $i$ present on the trek on the left- and 
right-hand side, respectively. 

To elaborate on the definition of the self-loop counts $\alpha_i$ and $\beta_i$,
consider the example trek 
\[
1 \leftarrow 1 \leftarrow 2 \leftarrow 2 \leftarrow 2 \leftarrow 1 \unEdge 1 \rightarrow 1 \rightarrow 2 \rightarrow 1  \rightarrow 1
\] 
with nodes $\{1, 2\}$. The corresponding base trek is
$1 \leftarrow 2 \leftarrow 1 \unEdge 1 \rightarrow 2  \rightarrow 1$, and the trek includes 
one self-loop at node $1$ and two self-loops at node $2$ on the left-hand side and two self-loops at node $1$ on the right-hand side, whence $\alpha_1 = 1$, $\alpha_2 = 2$, $\beta_1 = 2$, and $\beta_2 = 0$.

The base trek and self-loop counts do not uniquely determine a trek, but 
there are some constraints. If node $i$ is not present on the left-hand 
side of the base trek, then $\alpha_i = 0$. If node $i$ is present 
once on the left-hand side of the base trek, then $\alpha_i$ is exactly the 
number of self-loops at that position. In the example above, node $2$ is 
present once on the left-hand side of the base trek, and $\alpha_2 = 2$ 
gives the number of self-loops of node $2$ at that position.
If node $i$ is present more than once on the 
left-hand side of the base trek, then the total number of self-loops 
$\alpha_i$ can be partitioned in several ways among the different positions 
of node $i$, and similiarly for $\beta_i$ on the right-hand side.
This situation can only occur when the graph contains directed cycles beyond self-loops.
In the example above, node $1$ is 
present twice on the right-hand side of the base trek and $\beta_1 = 2$
can be partitioned in three different ways among the two occurrences of node $1$.

For a base trek $\tau \in \mathcal{G}_0$ and multiindices 
$\alpha, \beta \in \mathbb{N}_0^d$ we define $\rho(\tau, \alpha, \beta)$ 
as the number of ways the $\alpha$ and $\beta$ self-loops can be partitioned 
among the nodes of the base trek. If the base trek $\tau$ does not contain repeated nodes in 
its left- or right-hand sides, $\rho(\tau, \alpha, \beta) \in \{0, 1\}$, but 
when $\tau$ contains repeated nodes, it is possible that 
$\rho(\tau, \alpha, \beta) > 1$. Define also 
$$\alpha_{\bullet} = \sum_{i=1}^d \alpha_i$$
and similarly for $\beta$.

\begin{definition} \label{dfn:D} For $\lambda_1, \ldots, \lambda_d \in (-1, 1)$ and a 
    base trek $\tau$ define
    \begin{equation} \label{eq:genD}
    D(\lambda_1, \ldots, \lambda_d, \tau) = \sum_{\alpha, \beta \in \mathbb{N}_0^d}
    \rho(\tau, \alpha, \beta)
    {l(\tau) + \alpha_{\bullet} + \beta_{\bullet} \choose n(\tau) + \alpha_{\bullet}} 
    \prod_{i=1}^d \left(\frac{\lambda_i}{2}\right)^{\alpha_i + \beta_i}.
    \end{equation}
\end{definition}

It may not be obvious that the series in \eqref{eq:genD} converges. Its 
absolute convergence does, however, follow from Theorem \ref{thm:altrep} below
by taking $C = I$ and letting $m_{ii} \in [-1, 0)$ and $m_{ij} = \varepsilon$ 
for $i \neq j$ for a small $\varepsilon > 0$. Then $M + I$ has spectral radius 
strictly smaller than $1$ by Perron-Frobenius theory, and 
$D(m_{11} + 1, \ldots, m_{dd} + 1, \tau) < \infty$ for all base treks $\tau$
in the complete graph since \eqref{eq:trekrep4} below is finite and 
all terms in that sum are positive.

\begin{theorem} \label{thm:altrep} 
    Let $(M, C)$ be compatible with a mixed graph $\mathcal{G}$, and let $M$ 
    be stable and $C$ be positive semidefinite. If $\Lambda = M + I$ has spectral radius strictly smaller
    than $1$ and \mbox{$\Lambda_{ii} = m_{ii} + 1\in (-1, 1)$}, then
    \begin{align} \label{eq:trekrep3}
        \Sigma_{ij} & = \sum_{\tau \in \mathcal{T}_0(i,j)} 
        2^{-l(\tau) - 1} D(\Lambda_{11}, \ldots, \Lambda_{dd}, \tau)
        \omega(\Lambda, C, \tau) \\
        \label{eq:trekrep4}
        & = \sum_{\tau \in \mathcal{T}_0(i,j)} 
        2^{-l(\tau) - 1} D(m_{11} + 1, \ldots, m_{dd} + 1, \tau)
        \omega(M, C, \tau),
    \end{align}
    where $\mathcal{T}_0(i,j)$ denotes the set of all treks from $i$ to
    $j$ in the base graph $\mathcal{G}_0$ of $\mathcal{G}$. In the special case 
    where $m_{ii} = -1$ for all $i$, then 
    \begin{equation} \label{eq:trekrep5}
        \Sigma_{ij} = \sum_{\tau \in \mathcal{T}_0(i,j)} 
        2^{-l(\tau) - 1} {l(\tau) \choose n(\tau)}
        \omega(M, C, \tau).
    \end{equation}
\end{theorem}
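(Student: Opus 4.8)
The plan is to start from the trek rule already established in Proposition \ref{prop:trek}, namely
\[
\Sigma_{ij} = \sum_{\tau \in \mathcal{T}(i,j)} 2^{-l(\tau)-1} \binom{l(\tau)}{n(\tau)} \omega(\Lambda, C, \tau),
\]
and to reorganize this absolutely convergent sum over all treks in $\mathcal{G}$ into a sum over base treks in $\mathcal{G}_0$, collecting the self-loop contributions into the factor $D$. The guiding observation is that every trek $\tau \in \mathcal{T}(i,j)$ is obtained from a unique base trek $\tau_0 \in \mathcal{T}_0(i,j)$ by inserting $\alpha_i$ directed self-loops at node $i$ on the left-hand side and $\beta_i$ on the right-hand side, and that for fixed $(\tau_0, \alpha, \beta)$ there are exactly $\rho(\tau_0, \alpha, \beta)$ treks in $\mathcal{T}(i,j)$ arising this way, all with identical invariants and weight.

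First I would record how the trek invariants transform under this decomposition. Inserting the self-loops adds $\alpha_\bullet$ nodes to the left-hand side and $\beta_\bullet$ to the right-hand side, so that $n(\tau) = n(\tau_0) + \alpha_\bullet$ and $l(\tau) = l(\tau_0) + \alpha_\bullet + \beta_\bullet$. Since each self-loop at node $i$ contributes the factor $\Lambda_{ii}$ to the trek weight and the top $(i_0, j_0)$ is unchanged, we have
\[
\omega(\Lambda, C, \tau) = \omega(\Lambda, C, \tau_0) \prod_{i=1}^d \Lambda_{ii}^{\alpha_i + \beta_i}.
\]
Consequently $2^{-l(\tau)-1} = 2^{-l(\tau_0)-1} \prod_{i=1}^d 2^{-(\alpha_i+\beta_i)}$ and $\binom{l(\tau)}{n(\tau)} = \binom{l(\tau_0)+\alpha_\bullet+\beta_\bullet}{n(\tau_0)+\alpha_\bullet}$, which are precisely the ingredients appearing in Definition \ref{dfn:D}.

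Next I would partition the index set $\mathcal{T}(i,j)$ according to the base trek $\tau_0$ to which each $\tau$ reduces, and sum within each block. Substituting the identities above, and using that all $\rho(\tau_0,\alpha,\beta)$ treks in a fiber contribute equally, the block indexed by $\tau_0$ contributes
\[
2^{-l(\tau_0)-1} \omega(\Lambda, C, \tau_0) \sum_{\alpha, \beta \in \mathbb{N}_0^d} \rho(\tau_0, \alpha, \beta) \binom{l(\tau_0)+\alpha_\bullet+\beta_\bullet}{n(\tau_0)+\alpha_\bullet} \prod_{i=1}^d \left(\frac{\Lambda_{ii}}{2}\right)^{\alpha_i+\beta_i},
\]
and the inner sum is exactly $D(\Lambda_{11}, \ldots, \Lambda_{dd}, \tau_0)$. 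This yields \eqref{eq:trekrep3}. For \eqref{eq:trekrep4} I would note that a base trek has no self-loops, so all its directed edges are off-diagonal, where $\Lambda$ and $M$ agree; hence $\omega(\Lambda, C, \tau_0) = \omega(M, C, \tau_0)$ and $\Lambda_{ii} = m_{ii}+1$, giving the second form. For the special case $m_{ii} = -1$, each $\Lambda_{ii} = 0$ kills every term of $D$ except $\alpha = \beta = 0$, where $\rho(\tau_0, 0, 0) = 1$ and the binomial reduces to $\binom{l(\tau_0)}{n(\tau_0)}$, producing \eqref{eq:trekrep5}.

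The main obstacle I anticipate is the careful bookkeeping of the self-loop insertions, in particular justifying the combinatorial multiplicity $\rho(\tau_0, \alpha, \beta)$ when $\tau_0$ has repeated nodes (so that a given total count $\alpha_i$ can be distributed over several occurrences of node $i$), and verifying that the map sending $\tau$ to $(\tau_0, \alpha, \beta)$ together with the position data of the self-loops is a genuine bijection onto its image. The remaining analytic point, that the regrouping is legitimate and that each $D(\Lambda_{11}, \ldots, \Lambda_{dd}, \tau_0)$ converges, follows for free from the absolute convergence asserted in Proposition \ref{prop:trek}: partitioning an absolutely convergent series and summing blockwise preserves the value and guarantees that each block sum converges absolutely. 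This is precisely why the series \eqref{eq:genD} defining $D$ converges, resolving the concern raised after Definition \ref{dfn:D}.
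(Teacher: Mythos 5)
Your proposal is correct and takes essentially the same route as the paper's own proof: starting from Proposition \ref{prop:trek}, using its absolute convergence to regroup the sum over all treks into an outer sum over base treks and an inner sum over self-loop counts with multiplicity $\rho$, recognizing the inner sum as $D$, and then specializing via $\omega(\Lambda, C, \tau) = \omega(M, C, \tau)$ for base treks and $D(0,\ldots,0,\tau) = \binom{l(\tau)}{n(\tau)}$ to get \eqref{eq:trekrep4} and \eqref{eq:trekrep5}. Your closing remark that blockwise summation of an absolutely convergent series also settles the convergence of \eqref{eq:genD} is sound at the specific arguments $\Lambda_{11}, \ldots, \Lambda_{dd}$ (when the accompanying trek weight is non-zero), which is all the theorem's proof needs; the paper handles the general convergence of $D$ separately via the Perron--Frobenius argument following Definition \ref{dfn:D}.
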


\begin{proof} Since the convergence of the series \eqref{eq:trekrep2} is absolute, we can reorder  the terms.    
    The formula \eqref{eq:trekrep3} follows by splitting the sum \eqref{eq:trekrep2} into 
    an outer sum over the base treks and an inner sum over self-loops. 
    Specifically, 
    \begin{align*}
        \Sigma_{ij} & = \sum_{\tau \in \mathcal{T}(i,j)} 
        2^{-l(\tau) - 1} {l(\tau) \choose n(\tau)}
        \omega(\Lambda, C, \tau) \\
        & = \sum_{\tau \in \mathcal{T}_0(i,j)} 
        \sum_{\alpha, \beta \in \mathbb{N}_0^d}
        \rho(\tau, \alpha, \beta) 
        2^{-l(\tau) - \alpha_{\bullet} - \beta_{\bullet}- 1} 
        {l(\tau) + \alpha_{\bullet} + \beta_{\bullet} \choose n(\tau) + \alpha_{\bullet}}
        \prod_{i=1}^d \Lambda_{ii}^{\alpha_i + \beta_i}  \omega(\Lambda, C, \tau) \\
        & = \sum_{\tau \in \mathcal{T}_0(i,j)} 
        2^{-l(\tau) - 1} \left(\sum_{\alpha, \beta \in \mathbb{N}_0^d}
        \rho(\tau, \alpha, \beta)
        {l(\tau)  + \alpha_{\bullet} + \beta_{\bullet}  \choose n(\tau)  + \alpha_{\bullet}  } 
        \prod_{i=1}^d \left(\frac{\Lambda_{ii}}{2}\right)^{\alpha_i + \beta_i}\right)  
        \omega(\Lambda, C, \tau) \\
        & = \sum_{\tau \in \mathcal{T}_0(i,j)} 
        2^{-l(\tau) - 1} D(\Lambda_{11}, \ldots, \Lambda_{dd}, \tau)
        \omega(\Lambda, C, \tau).
    \end{align*}
    Moreover, \eqref{eq:trekrep4} follows by the definition of
    $\Lambda_{ii} = m_{ii} + 1$ and by noting that  $\omega(\Lambda, C, \tau)
    =   \omega(M, C, \tau)$ for any base trek $\tau$.
    Finally, if $m_{ii} = -1$ then $\Lambda_{ii} = m_{ii} + 1 = 0$ for 
    $i = 1, \ldots, d$, and since 
    $$D(0, \ldots, 0, \tau) = {l(\tau) \choose n(\tau)}$$ 
    for all base treks $\tau$, \eqref{eq:trekrep5} follows from \eqref{eq:trekrep4}.
\end{proof}

Note that for a base trek, the monomial 
$\omega(M, C, \tau)$ does not depend on the 
diagonal elements of $M$, and \eqref{eq:trekrep4} 
provides a certain disentanglement of how the total 
covariance depends on the self-loop entries 
and the other edge entries.

% \subsection{Metzler matrices} If $m_{ij} \geq 0$ for $i \neq j$, $M$ is 
% a Metzler matrix in addition to being stable. In this case,   
% $-M = I - \Lambda$ is also known as an 
% $M$-matrix because $\Lambda$ is a positive matrix satisfying $\Lambda_{ij} \geq 0$. 
% By Perron-Frobenius theory, the spectral radius of $\Lambda$ 
% is an eigenvalue, and the one with the largest real part. Since $M$
% is stable, this value is strictly less than 1. Thus 
% \eqref{eq:trekrep2} holds for Metzler matrices. Since $\Lambda_{ii} \geq 0$, 
% \eqref{eq:trekrep3} holds as well. Observe that when $M$ is a
% Metzler matrix, all covariances are non-negative if all entries of $C$
% are non-negative, so in particular if $C$ is diagonal.

\begin{ex} \label{ex:graph0_cont} To illustrate the trek rule we revisit
Example \ref{ex:graph0}, specifically the entry $\Sigma_{13} = 0.123$.
Note that $m_{ii} = -1$ for all $i$. 
Table \ref{tab:treks} lists all 12 base treks from $1$ to $3$ in the base graph (B) in Figure 
\ref{fig:graph0} with $l(\tau) \leq 5$ together with their corresponding 
trek weights and term values contributing to $\Sigma_{13}$ in the sum \eqref{eq:trekrep5}. 

\setlength{\tabcolsep}{4.5pt} % Default value: 6pt
\renewcommand{\arraystretch}{1.45} 

\begin{table}[h]
    \centering
    {\footnotesize 
    \begin{tabular}{lrlcclr}
         Trek & $\omega$ & $\omega$ factorization  &  $l$ &  $n$ & 
         $2^{-l - 1} {l \choose n} \cdot \omega$ & Term value \\
        \hline
        $1 \leftarrow 4 \unEdge 4 \rightarrow 3$ & $0.1$ & $0.2 \cdot 0.5$ & 
        $2$ & $1$ & $2^{-3} \cdot {2 \choose 1} \cdot 0.1$ & $0.02500000$  \\
        
        $1 \leftarrow 2 \leftarrow 3 \unEdge 3$ & $0.1$ & $0.5 \cdot 0.2$ & 
        $2$ & $2$ & $2^{-3} \cdot {2 \choose 2} \cdot 0.1$ & $0.01250000$  \\
        
        \hline
        
        $1 \leftarrow 4 \leftarrow 5 \leftarrow 3 \unEdge 3$ & 
        $0.2$ & $0.2 \cdot 1 \cdot 1$ & 
        $3$ & $3$ & $2^{-4} \cdot {3 \choose 3} \cdot 0.2$ & $0.01250000$  \\
        
        \hline
        
        $1 \leftarrow 2 \leftarrow 1 \leftarrow 2 \leftarrow 3 \unEdge 3$ & 
        $-0.05$ & $0.5 \cdot (- 1) \cdot 0.5 \cdot 0.2$ & 
        $4$ & $4$ & $2^{-5} \cdot {4 \choose 4} \cdot (-0.05)$ & $-0.00156250$  \\
        
        $1 \leftarrow 2 \leftarrow 1 \leftarrow 4 \unEdge 4 \rightarrow 3$ & 
        $-0.05$ & $0.5 \cdot (-1) \cdot 0.2 \cdot 0.5$ & 
        $4$ & $3$ & $2^{-5} \cdot {4 \choose 3} \cdot (-0.05)$ & $-0.00625000$  \\

        $1 \leftarrow 2 \leftarrow 3 \leftarrow 4 \unEdge 4 \rightarrow 3$ & 
        $0.025$ & $0.5 \cdot 0.2 \cdot 0.5 \cdot 0.5$ & 
        $4$ & $3$ & $2^{-5} \cdot {4 \choose 3} \cdot 0.025$ & $0.00312500$  \\
        
        $1 \leftarrow 4 \leftarrow 5 \unEdge 5 \rightarrow 4 \rightarrow 3$ & 
        $0.1$ & $0.2 \cdot 1 \cdot 1 \cdot 0.5$ & 
        $4$ & $2$ & $2^{-5} \cdot {4 \choose 2} \cdot 0.1$ & $0.01875000$  \\

        \hline
        
        $1 \leftarrow 2 \leftarrow 3 \leftarrow 4 \leftarrow 5 \leftarrow 
        3 \unEdge 3$ & 
        $0.05$ & $0.5 \cdot 0.2 \cdot 1 \cdot 1 \cdot 0.5$ & 
        $5$ & $5$ & $2^{-6} \cdot {5 \choose 5} \cdot 0.05$ & $0.00078125$  \\

        $1 \leftarrow 2 \leftarrow 1 \leftarrow 4 \leftarrow 5 \leftarrow 
        3 \unEdge 3$ & 
        $-0.1$ & $0.5 \cdot (-1) \cdot 0.2 \cdot 1 \cdot 1$ & 
        $5$ & $5$ & $2^{-6} \cdot {5 \choose 5} \cdot (-0.1)$ & $-0.00156250$  \\
        
        $1 \leftarrow 4 \leftarrow 5 \leftarrow 3 \leftarrow 4 \unEdge 4 \rightarrow 3$ & 
        $0.05$ & $0.2 \cdot 1 \cdot 1 \cdot 0.5 \cdot 0.5$ & 
        $5$ & $4$ & $2^{-6} \cdot {5 \choose 4} \cdot 0.05$ & $0.00390625$  \\

        $1 \leftarrow 2 \leftarrow 3 \unEdge 3 \rightarrow 5 \rightarrow 4 \rightarrow 3$ & 
        $0.05$ & $0.5 \cdot 0.2 \cdot 1 \cdot 1 \cdot 0.5$ & 
        $5$ & $2$ & $2^{-6} \cdot {5 \choose 2} \cdot 0.05$ & $0.00781250$  \\
        
        $1 \leftarrow 4 \unEdge 4 \rightarrow 3 \rightarrow 5 \rightarrow 4 
        \rightarrow 3$ & 
        $0.05$ & $0.2 \cdot 0.5 \cdot 1 \cdot 1 \cdot 0.5$ & 
        $5$ & $1$ & $2^{-6} \cdot {5 \choose 1} \cdot 0.05$ & $0.00390625$  \\
        \hline \hline 
        Total & & & & & & $0.07890625$ \\
    \end{tabular}
    }
    \caption{Base treks $\tau$ with $l = l(\tau) \leq 5$ in Figure \ref{fig:graph0} (B)
    and their corresponding weights $\omega = \omega(M, C, \tau)$ and term values
    $2^{-l - 1} {l \choose n} \cdot \omega$ in the sum \eqref{eq:trekrep5}.
    \label{tab:treks}}
\end{table}

\setlength{\tabcolsep}{6pt} % Default value: 6pt
\renewcommand{\arraystretch}{1} 

We see from Table \ref{tab:treks} that 
\[
    \sum_{\tau \in \mathcal{T}_0(1,3): l(\tau) \leq 5} 
    2^{-l(\tau) - 1} {l(\tau) \choose n(\tau)} \omega(M, C, \tau) 
    = 0.07890625,
\]
which is still a bit from the limit value $\Sigma_{13} = 0.123$. Proceeding up
to $l(\tau) = 10$ gives $74$ base treks, and the sum of the corresponding terms
is $0.10992737$, while the sum of the $515$ terms with $l(\tau) \leq 20$ is
$0.12127330$, which is accurate up to the second decimal. The purpose of this
example \emph{is not} to claim that the trek rule is useful for computing the
solution of the Lyapunov equation in the general case. On the contrary, there is
quite a lot of bookkeeping involved in computing the treks and the corresponding
terms in the sum, and you may need a fairly large number of terms to get an
accurate finite sum approximation. The purpose of the example is rather to
illustrate how the trek rule breaks down the total covariance into contributions
from the individual treks, with each term being a monomial in the edge
entries.
\end{ex}

\section{Acyclic models} \label{sec:acyclic}

If the directed part of the base graph $\mathcal{G}_0$ is acyclic, 
and thus a DAG, we say that the model given by $M$ and $C$ is acyclic. Choosing any 
topological order of the nodes will make the $M$-matrix lower triangular.
We assume throughout this section that the model is acyclic and that 
the nodes $1, \ldots, d$ are ordered in a topological order such that
$M$ is lower triangular, that is,
\[
    M = \left( \begin{array}{cccccc} 
        m_{11} & . & . & \ldots & . \\
        m_{21} & m_{22} & . & \ldots & . \\
        m_{31} & m_{32} & m_{33} & \ldots  & . \\
        \vdots & \vdots & \vdots & \ddots  & \vdots \\
        m_{d1} & m_{d2} & m_{d3} & \ldots &  m_{dd}        
    \end{array}\right). 
\]

The diagonal entries of $M$ are then the eigenvalues, and $M$ is stable 
if and only $m_{ii} < 0$ for all $i$. By rescaling, we can assume that
$m_{ii} \in [-1, 0)$ for all $i$ so that $\Lambda_{ii} = m_{ii} + 1 \in [0, 1)$,
in which case $\Lambda$ also has spectral radius strictly less than $1$.

\subsection{Simplifying the trek rule for acyclic models}
For an acyclic model any base trek has no nodes repeated on either side 
(though the 
same node can be present once on the left- and once on the right-hand side).
This means that $\rho(\tau, \alpha, \beta) \in \{0, 1\}$, and if we 
define 
$$\mathcal{N}(\tau) = \{ (\alpha, \beta) \in \mathbb{N}_0^d \times \mathbb{N}_0^d 
\mid \rho(\tau, \alpha, \beta) = 1\}$$
then 
\begin{equation} \label{eq:triD}
    D(m_{11} + 1, \ldots, m_{dd} + 1, \tau) = \sum_{(\alpha, \beta) \in \mathcal{N}(\tau)}
    {l(\tau) + \alpha_{\bullet} + \beta_{\bullet} \choose n(\tau) + \alpha_{\bullet}} 
    \prod_{i=1}^d \left(\frac{m_{ii} + 1}{2}\right)^{\alpha_i + \beta_i}.
\end{equation}
The expression \eqref{eq:triD} does not appear to simplify further in general.
However, if all the diagonal entries of $M$ are equal, in which case we can
assume them all equal to $-1$ by rescaling, the trek rule simplifies to
\eqref{eq:trekrep5}. This special case is effectively the same as Proposition 
 4.3 in \cite{boege2024}, which was obtained directly by an induction argument.

For any acyclic model, the directed part of $\mathcal{G}_0$ forms a DAG, and
there are no loops but self-loops in $\mathcal{G}$. The number of base treks
from $i$ to $j$ is thus finite, and \eqref{eq:trekrep4} gives a
\emph{finite} sum representation of $\Sigma_{ij}$. That is, the representation
\eqref{eq:trekrep4} is a polynomial in the off-diagonal entries of $M$ and the
entries of $C$.

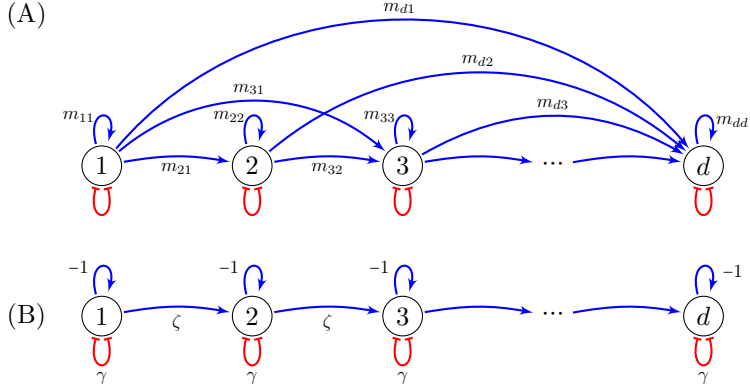
\begin{figure}
    \centering
    \begin{tikzpicture}
    \tikzset{vertex/.style = {shape=circle,draw,minimum size=1.5em, inner sep = 0pt, outer sep = 1pt}}
    \tikzset{vertexdot/.style = {shape=circle,fill=red,color=red,draw,minimum size=0.5em, inner sep = 0pt}}
    \tikzset{vertexhid/.style = {shape=rectangle,draw,minimum size=1.5em, inner sep = 0pt}}
    \tikzset{edge/.style = {->,> = latex', thick}}
    \tikzset{edgeun/.style = {-, thick}}
    \tikzset{edgebi/.style = {<->,> = latex', thick}}
    \tikzset{edgeblunt/.style = {{|[scale=0.5]}-{|[scale=0.5]}, thick}}

    \node at (-4, 2) {(A)};
    \node at (-4, -2) {(B)};
    
    \node[vertex] (A) at (-3, 0) {$1$};
    \node[vertex] (B) at (-1, 0) {$2$};
    \node[vertex] (C) at (1, 0) {$3$};
    \node (D) at (3, 0) {$\cdots$};
    \node[vertex] (E) at (5, 0) {$d$};
    
    \draw[edge] (A) edge[loop above, color=blue] node[left, color=black, scale=0.7] {$m_{11}$} (A);
    \draw[edge] (A) edge[bend left = 10, color=blue] node[below, color=black, scale=0.7] {$m_{21}$} (B);
    \draw[edge] (A) edge[bend left = 40, color=blue]  node[above, pos=0.5, color=black, scale=0.7] {$m_{31}$ \ } (C);
    \draw[edge] (A) edge[bend left = 50, color=blue]  node[above, pos=0.5, color=black, scale=0.7] {$m_{d1}$ \ } (E);
    \draw[edge] (B) edge[bend left  = 10, color=blue] node[below, color=black, scale=0.7] {$m_{32}$} (C);
    \draw[edge] (B) edge[bend left  = 40, color=blue] node[above, color=black, scale=0.7] {$m_{d2}$} (E);
    \draw[edge] (B) edge[loop above, color=blue] node[left, color=black, scale=0.7] {$m_{22}$} (B);
    \draw[edge] (C) edge[loop above, color=blue] node[left, color=black, scale=0.7] {$m_{33}$} (C);
    \draw[edge] (C) edge[bend left  = 10, color=blue]  (D);
    \draw[edge] (C) edge[bend left  = 30, color=blue] node[above, color=black, scale=0.7] {$m_{d3}$} (E);
    \draw[edge] (D) edge[bend left  = 10, color=blue]  (E);
    \draw[edge] (E) edge[loop above, color=blue] node[right, pos=0.7, color=black, scale=0.7] {$m_{dd}$} (E);
    
    \draw[edgeblunt, loop below, color=red] (A) to (A);
    \draw[edgeblunt, loop below, color=red] (B) to (B);
    \draw[edgeblunt, loop below, color=red] (C) to (C);
    \draw[edgeblunt, loop below, color=red] (E) to (E);

    \node[vertex] (A1) at (-3, -2) {$1$};
    \node[vertex] (B1) at (-1, -2) {$2$};
    \node[vertex] (C1) at (1, -2) {$3$};
    \node (D1) at (3, -2) {$\cdots$};
    \node[vertex] (E1) at (5, -2) {$d$};
    
    \draw[edge] (A1) edge[loop above, color=blue] node[left, color=black, scale=0.7] {$-1\ $} (A1);
    \draw[edge] (A1) edge[bend left = 10, color=blue] node[below, color=black, scale=0.7] {$\zeta$} (B1);
    \draw[edge] (B1) edge[bend left  = 10, color=blue] node[below, color=black, scale=0.7] {$\zeta$} (C1);
    \draw[edge] (B1) edge[loop above, color=blue] node[left, color=black, scale=0.7] {$-1\ $} (B1);
    \draw[edge] (C1) edge[loop above, color=blue] node[left, color=black, scale=0.7] {$-1\ $} (C1);
    \draw[edge] (C1) edge[bend left  = 10, color=blue]  (D1);
    \draw[edge] (D1) edge[bend left  = 10, color=blue]  (E1);
    \draw[edge] (E1) edge[loop above, color=blue] node[right, pos=0.7, color=black, scale=0.7] {$\ -1$} (E1);
    
    \draw[edgeblunt, loop below, color=red] (A1) to node[below, color=black, scale=0.7] {$\gamma$} (A1);
    \draw[edgeblunt, loop below, color=red] (B1) to node[below, color=black, scale=0.7] {$\gamma$} (B1);
    \draw[edgeblunt, loop below, color=red] (C1) to node[below, color=black, scale=0.7] {$\gamma$} (C1);
    \draw[edgeblunt, loop below, color=red] (E1) to node[below, color=black, scale=0.7] {$\gamma$} (E1);
    
    \end{tikzpicture} 
    \caption{The mixed graph (A) for a general acyclic model with the nodes in 
        a topological order, and the mixed graph (B) for the specific model in Example \ref{ex:path}. 
        \label{fig:graph1}}
\end{figure}

\begin{ex}[Path model] \label{ex:path}
    Suppose $m_{ii} = -1$, $m_{(i+1)i} = \zeta \in \mathbb{R}$ and $c_{ii} = \gamma \geq 0$. 
    All other entries are 0.
    That is
    \[
    M = \left( \begin{array}{ccccccc} 
        -1 & . & . & \ldots & . & . \\
        \zeta & -1 & . & \ldots & . & . \\
        . & \zeta & -1 & \ldots & . & . \\
        \vdots & \vdots & \vdots & \ddots & \vdots & \vdots \\
        . & . & . & \ldots & -1 & . \\        
        . & . & . & \ldots & \zeta & -1        
    \end{array}\right) \quad \text{and} \quad
    C = \left( \begin{array}{ccccccc} 
        \gamma & . & . & \ldots & . & . \\
        . & \gamma & . & \ldots & . & .\\
        . & . & \gamma & \ldots & . & . \\
        \vdots & \vdots & \vdots & \ddots  & \vdots & \vdots \\
        . & . & . & \ldots & \gamma & . \\
        . & . & . & \ldots & . & \gamma
    \end{array}\right).
    \]
    The mixed graph for this model is shown in Figure \ref{fig:graph1} (B).
    We have $\Lambda_{(i+1)i} = \zeta$ and all 
    other entries of $\Lambda$ are 0. The only base treks 
    from $i$ to $j$ have top $i_0 \leq \min\{i,j\}$ and are of the form
    $$\tau: i \leftarrow i - 1 \leftarrow \ldots i_0 + 1 \leftarrow i_0 
    \unEdge i_0 \rightarrow i_0 + 1 \rightarrow \ldots \rightarrow j - 1 \rightarrow j,$$
    for which $n(\tau) = i - i_0$, $m(\tau) = j - i_0$, $l(\tau) = i + j - 2i_0$.
    This shows that 
    \begin{align} 
    \Sigma_{ij} & = \sum_{i_0=1}^{\min\{i,j\}} 2^{2i_0 - i - j - 1}
     {i + j - 2i_0 \choose i - i_0} \zeta^{i + j - 2 i_0} \gamma  \nonumber \\
     & =  \frac{\gamma}{2}  \left(\frac{\zeta}{2}\right)^{i + j} 
     \sum_{i_0=1}^{\min\{i,j\}} \left(\frac{4}{\zeta^2}\right)^{i_0}
     {i + j - 2i_0 \choose i - i_0}. \label{eq:Sigmaij}
    \end{align}
    We see that 
    $\Sigma_{i1} = \Sigma_{1i} = \gamma / 2 (\zeta / 2)^{i-1}$, and by induction 
    the following recursion holds for $i,j > 1$ :
    \begin{equation} \label{eq:recursion}
        \Sigma_{ij} = \frac{\zeta}{2} \Sigma_{i-1,j} + 
        \frac{\zeta}{2}  \Sigma_{i,j-1} + \frac{\gamma}{2} \delta_{ij},
    \end{equation}
    where $\delta_{ij}$ is the Kronecker delta. 

    If we take $\zeta = \gamma = 2$, we get the simple expression 
    \begin{equation} \label{eq:Sigmaij2}
        \Sigma_{ij} = \sum_{i_0=1}^{\min\{i,j\}} 
        {i + j - 2i_0 \choose i - i_0},
    \end{equation}
    involving only a sum of binomial coefficients. Then $\Sigma_{i1} = \Sigma_{1i} = 1$,
    the recursion is 
    \[
        \Sigma_{ij} =  \Sigma_{i-1,j} + 
        \Sigma_{i,j-1} + \delta_{ij},
    \]
    and the corresponding covariance matrix becomes  
    \begin{equation}  \label{eq:pascal}
        \Sigma = \left( \begin{array}{cccccc} 
            1 & 1 & 1 & 1 & 1 & \ldots \\
            1 & 3 & 4 & 5 & 6 & \ldots \\
            1 & 4 & 9 & 14 & 20 & \ldots \\
            1 & 5 & 14 & 29 & 49 & \ldots \\
            1 & 6 & 20 & 49 & 99 & \ldots \\
            \vdots & \vdots & \vdots & \vdots & \vdots & \ddots
        \end{array}\right)
    \end{equation}
    with the anti-diagonal elements being similar to Pascal's triangle 
    except that an extra 1 is added to the diagonal. 
    This is \href{https://oeis.org/A013580}{OEIS A013580}. 
\end{ex}

\begin{ex}[Factor model] \label{ex:factor} Another simple 
    acyclic model is the factor model with $m_{11} = -1$, $m_{ii} \in [-1, 0)$ for 
    $i = 2, \ldots, d$, $C$ diagonal, and the only possible off-diagonal 
    non-zero entries of $M$ being $m_{i1}$ for $i = 2, \ldots, d$. That is, 
    {\small \[
    M = \left( \begin{array}{cccccc} 
        -1 & . & . & \ldots &  . \\
        m_{21} & m_{22} & . & \ldots & . \\
        m_{31} & . & m_{33} & \ldots & . \\
        \vdots & \vdots & \vdots & \ddots & \vdots \\
        m_{d1} & . & . & \ldots  & m_{dd}
    \end{array}\right) \quad \text{and} \quad
    C = \left( \begin{array}{cccccc} 
        c_{11} & . & . & \ldots & . \\
        . & c_{22} & . & \ldots & .\\
        . & . & c_{33} & \ldots  & . \\
        \vdots & \vdots & \vdots & \ddots & \vdots \\
        . & . & . & \ldots & c_{dd}
    \end{array}\right).
    \] \par}
    The mixed graph for this model is shown in Figure \ref{fig:graph2}.
    The only base treks are of the form $i \unEdge i$ and 
    $ i \leftarrow 1 \unEdge 1 \rightarrow j$ for $i, j > 1$. 
    Using \eqref{eq:trekrep4} we get for $i, j > 1$ that 
    \begin{equation} \label{eq:factor_off}
        \Sigma_{ij} = \left\{ 
            \begin{array}{ll} 
                d^0_i c_{ii} +  d_{ii} m_{i1}^2 c_{11} & \quad \text{if } i = j \\
                d_{ij} m_{i1}m_{j1} c_{11} & \quad \text{if } i \neq j.
            \end{array}
        \right.
    \end{equation}
    where 
    \begin{align*}
    d^0_{i} & = 2^{-1} D(0, m_{22} + 1, \ldots, m_{dd} + 1, i \unEdge i) \\
    d_{ij} & = 2^{-3} D(0, m_{22} + 1, \ldots, m_{dd} + 1, i \leftarrow 1 \unEdge 1 \rightarrow j).
    \end{align*}
A somewhat tedious computation, similar to the one given in the proof of Proposition \ref{prop:lowerbound},
shows that $d^0_{i} = - 1/(2m_{ii})$ and
    \begin{equation} \label{eq:dij}
        d_{ij} = \frac{1}{2} \frac{m_{ii} + m_{jj} - 2}{(1 - m_{ii})(1 - m_{jj})(m_{ii} + m_{jj})} > 0.    
    \end{equation}

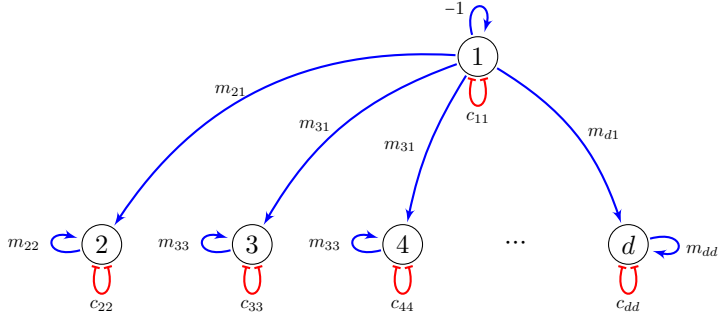
\begin{figure}
    \centering
    \begin{tikzpicture}
    \tikzset{vertex/.style = {shape=circle,draw,minimum size=1.5em, inner sep = 0pt, outer sep = 1pt}}
    \tikzset{vertexdot/.style = {shape=circle,fill=red,color=red,draw,minimum size=0.5em, inner sep = 0pt}}
    \tikzset{vertexhid/.style = {shape=rectangle,draw,minimum size=1.5em, inner sep = 0pt}}
    \tikzset{edge/.style = {->,> = latex', thick}}
    \tikzset{edgeun/.style = {-, thick}}
    \tikzset{edgebi/.style = {<->,> = latex', thick}}
    \tikzset{edgeblunt/.style = {{|[scale=0.5]}-{|[scale=0.5]}, thick}}
    
    \node[vertex] (A) at (4, 2.5) {$1$};
    \node[vertex] (B) at (-1, 0) {$2$};
    \node[vertex] (C) at (1, 0) {$3$};
    \node[vertex] (D) at (3, 0) {$4$};
    \node at (4.5, 0) {$\cdots$};
    \node[vertex] (E) at (6, 0) {$d$};
    
    \draw[edge] (A) edge[loop above, color=blue] node[left, color=black, scale=0.7] {$-1\ $} (A);
    \draw[edge] (B) edge[loop left, color=blue] node[left, color=black, scale=0.7] {$m_{22}\ $} (B);
    \draw[edge] (C) edge[loop left, color=blue] node[left, color=black, scale=0.7] {$m_{33}\ $} (C);
    \draw[edge] (D) edge[loop left, color=blue] node[left, color=black, scale=0.7] {$m_{33}\ $} (C);
    \draw[edge] (E) edge[loop right, color=blue] node[right, pos=0.7, color=black, scale=0.7] {$\  m_{dd}$} (E);

    \draw[edge] (A) edge[bend right = 30, color=blue] node[left, pos = 0.5, color=black, scale=0.7] {$m_{21}\ $} (B);
    \draw[edge] (A) edge[bend right = 20, color=blue]  node[left, pos=0.5, color=black, scale=0.7] {$m_{31}\ $} (C);
    \draw[edge] (A) edge[bend right = 10, color=blue]  node[left, pos=0.5, color=black, scale=0.7] {$m_{31}\ $ } (D);
    \draw[edge] (A) edge[bend left = 20, color=blue]  node[right, pos=0.5, color=black, scale=0.7] {$\ m_{d1}$} (E);
   
    \draw[edgeblunt, loop below, color=red] (A) to node[below, color=black, scale=0.7] {$c_{11}$} (A);
    \draw[edgeblunt, loop below, color=red] (B) to node[below, color=black, scale=0.7] {$c_{22}$} (B);
    \draw[edgeblunt, loop below, color=red] (C) to node[below, color=black, scale=0.7] {$c_{33}$} (C);
    \draw[edgeblunt, loop below, color=red] (D) to node[below, color=black, scale=0.7] {$c_{44}$} (D);
    \draw[edgeblunt, loop below, color=red] (E) to node[below, color=black, scale=0.7] {$c_{dd}$} (E);

    \end{tikzpicture} 
    \caption{The mixed graph for the factor model in Example \ref{ex:factor}. 
        \label{fig:graph2}}
\end{figure}

Another way to write the off-diagonal entries is
\begin{align} 
%    \label{eq:factor_diag}
%    \Sigma_{ii} & = - \frac{1}{2 m_{ii}} 
%    \left( c_{ii} + \frac{m_{i1}^2 c_{11}}{1 - m_{ii}} \right) \qquad i > 1 \\
    \label{eq:factor_off2}
    \Sigma_{ij} & = \left(\frac{1}{2} - \frac{1}{m_{ii} + m_{jj}}\right) 
    \frac{m_{i1}}{1 - m_{ii}} \frac{m_{j1}}{1 - m_{jj}} c_{11} \qquad i, i > 1, i \neq j,
\end{align}
which reveals that the model does not generally have the same low-rank structure as 
the classical one-factor model based on linear additive noise. Indeed, the 
tetrad $\Sigma_{ij} \Sigma_{kl} - \Sigma_{il}\Sigma_{kj}$ is non-zero 
unless $(m_{ii} + m_{jj})(m_{kk} + m_{ll}) = (m_{ii} + m_{ll})(m_{jj} + m_{kk})$,
see \cite{Drton:2007aa,drton2024} for further details on algebraic invariants 
for factor models. It is an open problem to characterize invariants for factor 
models based on the Lyapunov equation.
% The formula for the solution holds even if $m_{ii} < -1$ ... 
\end{ex}

\subsection{A lower bound on the marginal variance} 

Consider an \emph{acyclic} linear additive noise model with the nodes in a topological order
such that 
\[
 B = \left( \begin{array}{cccccc} 
    0 & . & . & \ldots & . \\
    \beta_{21} & 0 & . & \ldots & . \\
    \beta_{31} & \beta_{32} & 0 & \ldots & . \\
    \vdots & \vdots & \vdots & \ddots & \vdots \\
    \beta_{d1} & \beta_{d2} & \beta_{d3} & \ldots & 0   
\end{array}\right) = 
\left( \begin{array}{cc} 
    B_{11} & 0 \\
    B_{d1} & 0 
\end{array}\right),
\]
where $B_{11}$ is the $(d-1) \times (d-1)$ upper left block of $B$ and $B_{d1} = (\beta_{d1}, \ldots, \beta_{d(d-1)})$ is the $d-1$ first entries of the 
last row of $B$. It then 
follows directly from \eqref{eq:LAN} that for this linear additive noise model,
\[
    X_d = \sum_{i=1}^{d-1} B_{di} X_i + \varepsilon_d = B_{d1} (X_1, \ldots, X_{d-1})^T + \varepsilon_d.
\]  
Hence, since $\varepsilon_d$ is independent of $(X_1, \ldots, X_{d-1})^T$ and 
$\Omega_{dd} = \text{Var}(\varepsilon_d)$, the marginal variance of $X_d$ is 
\begin{equation} \label{eq:LAN2}
    \Sigma_{dd} =  \Omega_{dd} + B_{d1} \bSigma_{11} (B_{d1})^T,
\end{equation}
where $\bSigma_{11}$ is the
covariance matrix of $(X_1, \ldots, X_{d-1})^T$. 

The marginal variance of the last node $d$ is by \eqref{eq:LAN2} decomposed 
into the
variance of the residual error $\varepsilon_d$ and the variance propagated
forward from all parents of $d$. As these parents have variances that again
decompose into residual error variances and the variances propagated from their
parents, the marginal variances $\Sigma_{ii}$ tend to increase along the
topological order of the nodes. \cite{reisach2021beware}
introduced \emph{varsortability} as a measure that captures this phenomenon, and they
demonstrated that commonly used simulation models typically have marginal
variances that increase along the topological order. In this case, the
topological order can be identified from the order of the marginal variances.
See also \cite{Park:2020} for related ideas. 

In this section we explore if a similar phenomenon holds for the solution of the
Lyapunov equation in the acyclic case, and we will, in particular, investigate
if the variance $\Sigma_{dd}$ has a decomposition similar to \eqref{eq:LAN2}.
Specifically, we derive a lower bound on $\Sigma_{dd}$ in terms of the other
(co)variances, which is similar to the identity \eqref{eq:LAN2}. To state the
result, we write the matrices $M$ and $\Sigma$ in block form as
\begin{align*}
    M = \begin{pmatrix}
        M_{11} & 0\\
        M_{d1} & m_{dd}
    \end{pmatrix} \quad \text{and} \quad
    \Sigma = \begin{pmatrix}
        \bSigma_{11} & \bSigma_{1d} \\
        \bSigma_{d1} & \Sigma_{dd}
    \end{pmatrix},
\end{align*}
where $M_{11}$ is a $(d-1) \times (d-1)$ matrix, 
 and $M_{d1}$ is a $(d-1)$-dimensional row vector, and similarly for
$\Sigma$.

\begin{proposition} \label{prop:lowerbound}
    Let $M$ be a lower triangular matrix with $m_{ii} \in [-1,0)$ and 
    $m_{ij} \geq 0$ for $j < i$, and let $C$ be a diagonal 
    positive semidefinite matrix. With $\Sigma$ the solution of 
    the Lyapunov equation, the following lower bound holds for the marginal 
    variance of node $d$:
    \begin{equation} \label{eq:lowerbound}
        \Sigma_{dd} \geq  - \frac{c_{dd}}{2 m_{dd}}  + 
        \frac{1}{2} M_{d1} \bSigma_{11} (M_{d1})^T.
    \end{equation}
\end{proposition}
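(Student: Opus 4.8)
The plan is to solve the Lyapunov equation block by block, obtain an exact formula for $\Sigma_{dd}$, and then reduce the desired inequality to an \emph{entrywise} nonnegativity statement that can be read off the trek expansion of Proposition \ref{prop:trek}. First I would partition \eqref{eq:Lyapunov} according to the block forms of $M$, $\Sigma$ and $C$ stated above. Writing $v = (M_{d1})^T$ and using that $C$ is diagonal (so its off-diagonal block vanishes), the three distinct blocks read
\[
M_{11}\bSigma_{11} + \bSigma_{11} M_{11}^T + C_{11} = 0, \qquad (M_{11} + m_{dd} I)\bSigma_{1d} = -\bSigma_{11} v, \qquad 2 M_{d1}\bSigma_{1d} + 2 m_{dd}\Sigma_{dd} + c_{dd} = 0,
\]
where the first is itself a Lyapunov equation for the subprocess on $\{1,\dots,d-1\}$. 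Setting $G = -(M_{11} + m_{dd} I) = |m_{dd}| I - M_{11}$, which is invertible since its eigenvalues are $|m_{dd}| + |m_{ii}| > 0$, the second equation gives $\bSigma_{1d} = G^{-1}\bSigma_{11} v$, and substituting into the third yields the exact identity
\[
\Sigma_{dd} = -\frac{c_{dd}}{2 m_{dd}} + \frac{1}{|m_{dd}|}\, v^T G^{-1}\bSigma_{11}\, v .
\]
Thus \eqref{eq:lowerbound} is equivalent to $v^T G^{-1}\bSigma_{11} v \ge \tfrac{|m_{dd}|}{2}\, v^T\bSigma_{11} v$.

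Since $v^T G^{-1}\bSigma_{11} v = \tfrac12 v^T\big(G^{-1}\bSigma_{11} + \bSigma_{11} G^{-T}\big) v$, I would symmetrize using the subprocess Lyapunov equation. Multiplying $G\bSigma_{11} + \bSigma_{11} G^T = 2|m_{dd}|\bSigma_{11} + C_{11}$ on the left by $G^{-1}$ and on the right by $G^{-T}$, and putting $w = G^{-T} v$, turns the target inequality into $w^T N w \ge 0$, where expanding $G\bSigma_{11}G^T$ and using $M_{11}\bSigma_{11} + \bSigma_{11}M_{11}^T = -C_{11}$ gives
\[
N = \big(\bSigma_{11} - M_{11}\bSigma_{11}M_{11}^T\big) + (1 - m_{dd}^2)\,\bSigma_{11} + \frac{1 - m_{dd}^2}{|m_{dd}|}\, C_{11} .
\]
Because $|m_{dd}| \le 1$, the last two terms are positive semidefinite; and because $G$ is a lower-triangular matrix with positive diagonal and nonpositive off-diagonal entries, $G^{-1}$, and hence $G^{-T}$, is entrywise nonnegative, so $w = G^{-T} v \ge 0$ entrywise (here the hypothesis $m_{ij}\ge 0$ enters through $v \ge 0$). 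It therefore suffices to show that $N_0 := \bSigma_{11} - M_{11}\bSigma_{11}M_{11}^T$ has nonnegative entries, since then $w^T N_0 w \ge 0$ for the nonnegative vector $w$.

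The entrywise nonnegativity of $N_0$ is the crux and where I expect the real work to lie, since $N_0$ is in general \emph{not} positive semidefinite. I would prove it from the trek (Neumann) expansion of Proposition \ref{prop:trek} applied to the subprocess: with $\Lambda_{11} = M_{11} + I$ entrywise nonnegative and of spectral radius $<1$,
\[
\bSigma_{11} = \sum_{n,m\ge 0} 2^{-n-m-1}\binom{n+m}{n}\,\Lambda_{11}^n C_{11}(\Lambda_{11}^T)^m .
\]
Writing $M_{11} = \Lambda_{11} - I$ gives $N_0 = \Lambda_{11}\bSigma_{11} + \bSigma_{11}\Lambda_{11}^T - \Lambda_{11}\bSigma_{11}\Lambda_{11}^T$, and collecting the coefficient of $\Lambda_{11}^p C_{11}(\Lambda_{11}^T)^q$ reduces the claim to checking, for $p,q\ge 1$, that
\[
\binom{p+q}{p} \ge 2\binom{p+q-2}{p-1},
\]
which simplifies to $p(p-1) + q(q-1) \ge 0$ and hence always holds (the boundary coefficients with $p=0$ or $q=0$ are single nonnegative terms). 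As each $\Lambda_{11}^p C_{11}(\Lambda_{11}^T)^q$ is entrywise nonnegative and all coefficients are nonnegative, $N_0 \ge 0$ entrywise. Combining the three nonnegative contributions to $w^T N w$ then proves the bound. The main obstacle is precisely this last nonnegativity of $N_0$; the hypotheses $|m_{ii}|\le 1$ and $m_{ij}\ge 0$ are used exactly to make the remaining terms of $N$ positive semidefinite and the vector $w$ nonnegative, respectively, and the scalar case $d=2$ shows the constant $\tfrac12$ is sharp.
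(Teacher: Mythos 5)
Your proof is correct, and it takes a genuinely different route from the paper's. The paper works entirely inside the trek expansion: it splits the base treks from $d$ to $d$ into $d \unEdge d$ and $d \leftarrow \tilde{\tau} \rightarrow d$, evaluates the self-loop contribution exactly as $-c_{dd}/(2m_{dd})$ via a negative binomial series, and lower-bounds the remainder $R$ trek-by-trek using Proposition \ref{prop:Drep} together with the bound $H(a,b,z) \geq 2$ of Lemma \ref{lem:2bound}. You instead eliminate blocks in the Lyapunov equation itself, which yields the \emph{exact} identity $\Sigma_{dd} = -c_{dd}/(2m_{dd}) + |m_{dd}|^{-1} M_{d1} G^{-1} \bSigma_{11} (M_{d1})^T$ with $G = |m_{dd}|I - M_{11}$ — valid with no sign assumptions at all, and giving a closed form for the remainder $R$ that the paper's closing remark describes only as ``somewhat complicated'' — and you then reduce the bound to the entrywise nonnegativity of $N_0 = \bSigma_{11} - M_{11}\bSigma_{11}M_{11}^T$, verified from the series of Proposition \ref{prop:trek} via the coefficient inequality $\binom{p+q}{p} \geq 2\binom{p+q-2}{p-1}$. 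All the steps check out: the three block equations are right; $G$ is a triangular M-matrix, so $G^{-1} \geq 0$ entrywise by the nilpotent Neumann series, giving $w = G^{-T}v \geq 0$; the symmetrization $G\bSigma_{11} + \bSigma_{11}G^T = 2|m_{dd}|\bSigma_{11} + C_{11}$ is correct; $m_{dd} \in [-1,0)$ makes the extra terms of $N$ positive semidefinite; $\Lambda_{11} = M_{11} + I$ is entrywise nonnegative with spectral radius $\max_i(m_{ii}+1) < 1$, so Proposition \ref{prop:trek} applies to the subprocess and the regrouping by powers $\Lambda^p C (\Lambda^T)^q$ is licensed by absolute convergence, with the boundary cases $p=0$ or $q=0$ contributing the nonnegative coefficients $2^{-p-q}$. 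It is worth noting that the combinatorial kernel of the two arguments is literally the same fact in different packaging: your inequality is, after cancellation, $(p+q)(p+q-1) \geq 2pq$, and with $p = a+1$, $q = b+1$ this is exactly the lowest-order ($n=m=0$) term $(a+b+2)(a+b+1)/((a+1)(b+1)) \geq 2$ by which the paper proves $H(a,b,z) \geq 2$. What each approach buys: yours isolates precisely where each hypothesis enters ($m_{ij} \geq 0$ for $w \geq 0$ and $N_0$'s expansion; $m_{ii} \geq -1$ for the PSD terms and $\Lambda_{11} \geq 0$; diagonal $C$ only through $C_{1d} = 0$ and $C_{11} \geq 0$ entrywise, so your argument in fact needs slightly less than full diagonality), while the paper's stays interpretable trek-by-trek and develops intermediate objects ($D$, $H$) that are reused elsewhere, e.g.\ in Example \ref{ex:factor}.
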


The proof is given in Section \ref{sec:proof}. Note that \eqref{eq:lowerbound}
does not hold generally if the off-diagonal entries $m_{ij}$ are allowed to 
be negative. 

\begin{ex} \label{ex:simple}
    To illuminate what the lower bound in Proposition \ref{prop:lowerbound}
    says, and to illustrate the phenomenon that marginal variances tend to increase along 
    the topological order, we reconsider the simple acyclic model
    from Example \ref{ex:path} with $\zeta = \gamma = 1$. Then 
    \begin{equation} \label{eq:Sigmaij_simple}
        \Sigma_{ij} =  2^{-i - j - 1} \sum_{i_0=1}^{\min\{i,j\}} 4^{i_0}
        {i + j - 2i_0 \choose i - i_0}.
    \end{equation}
    
    From \eqref{eq:Sigmaij_simple} it follows directly that for $d \geq 2$,
    \begin{align}
            \Sigma_{dd} & = 2^{-2d - 1} \sum_{i_0=1}^{d} 4^{i_0}
        {2(d - i_0) \choose d - i_0} \label{eq:Sigmadd} \\
        & = 2^{-2(d-1)-1}\binom{2(d-1)}{d-1} + \underbrace{2^{-2d-1} \sum_{i_0=2}^d 4^{i_0} \binom{2(d-i_0)}{d-i_0}}_{ =  \Sigma_{(d-1)(d-1)}} > \Sigma_{(d-1)(d-1)}, \nonumber 
            % & = \frac12 \cdot 2^{-2(d-1)}\binom{2(d-1)}{d-1} + \Sigma_{(d-1)(d-1)} %\\
            %& = \frac12 \cdot \prod_{k=1}^{d-1}\left(\frac{2k-1}{2k}\right) + \Sigma_{(d-1)(d-1)}.
    \end{align}
    which shows that the marginal variance $\Sigma_{dd}$ is strictly increasing 
    as a function of $d$, see Figure \ref{fig:lowerbound}. This is not surprising, since the
    sink node $d$ accumulates the variances from all other nodes.
     
    To compare $\Sigma_{dd}$ to the lower bound in \eqref{eq:lowerbound}    
    we first derive a slightly different representation of $\Sigma_{dd}$. 
    Using that ${n \choose k} = \frac{n(n-1)}{k(n - k)} {n-2 \choose k-1}$ 
    for the second identity below, we get from \eqref{eq:Sigmadd} 
    that for $d \geq 2$,
    \begin{align*}
        \Sigma_{dd} 
        & =  \frac{1}{2} + 2^{-2d - 1}  \sum_{i_0=1}^{d-1} 4^{i_0}
        {2(d - i_0) \choose d - i_0} \\
        & = \frac{1}{2} + 2^{-2(d-1) - 1} 2^{-2} \sum_{i_0=1}^{d-1} 4^{i_0}
        \frac{2(d - i_0)(2(d - i_0) - 1)}{(d - i_0)^2} {2(d - 1 - i_0) \choose d - 1 - i_0} \\ 
        & = \frac{1}{2} + 2^{-2(d-1) - 1} \sum_{i_0=1}^{d-1} 4^{i_0}
        \left(1 - \frac{1}{2(d - i_0)}\right) {2(d - 1 - i_0) \choose d - 1 - i_0}. \\
    \end{align*}

\begin{figure}
    \centering
    \includegraphics[width = 0.9\textwidth]{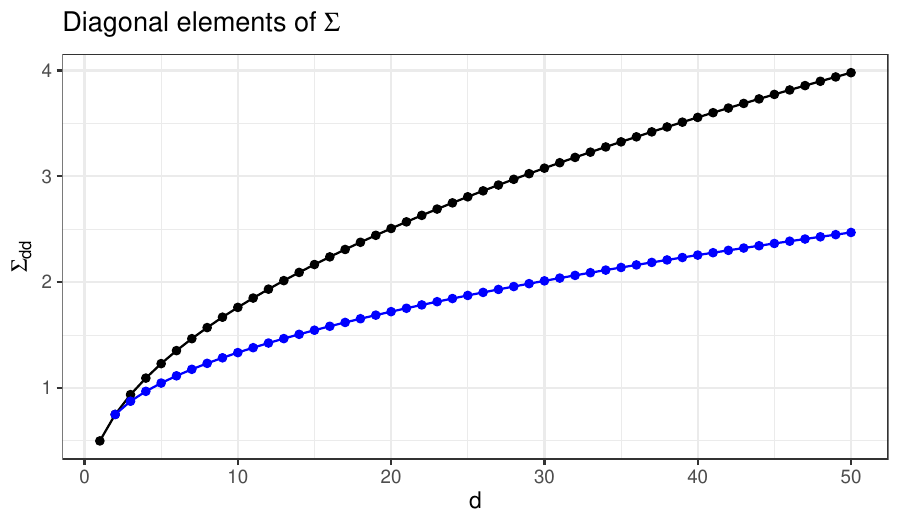}
    \caption{The variance $\Sigma_{dd}$ in Example \ref{ex:simple} 
    given by \eqref{eq:Sigmadd} as a function of $d$ (black) and the lower bound given by 
    \eqref{eq:lowerbound2} (blue). \label{fig:lowerbound}}
\end{figure}

    The second term above is $\Sigma_{(d-1)(d-1)}$ except from the factors 
    $(1 - 1/(2(d - i_0)))$ within the sum. Observe that 
    \[
        \frac{1}{2} \leq \left(1 - \frac{1}{2(d - i_0)}\right) < 1,
    \]
    with the factor being equal to $1/2$ for $i_0 = d-1$. This shows that
    \[
        \frac{1}{2} + \frac{1}{2} \Sigma_{(d-1)(d-1)} \leq \Sigma_{dd} < 
        \frac{1}{2} + \Sigma_{(d-1)(d-1)}.
    \]
    To compute the lower bound in \eqref{eq:lowerbound}, we first 
    note that $m_{dd} = -1$ and $c_{dd} = 1$, thus 
    \[
        - \frac{c_{dd}}{2 m_{dd}} = \frac{1}{2}.
    \]
    Moreover, $M_{d1} = (0, 0, \ldots, 1)$, thus $M_{d1} \bSigma_{11} (M_{d1})^T = 
    \Sigma_{(d-1)(d-1)}$. This shows that the lower bound in \eqref{eq:lowerbound}
    is indeed 
    \begin{equation}
        \label{eq:lowerbound2}
        \Sigma_{dd} \geq \frac{1}{2} + \frac{1}{2} \Sigma_{(d-1)(d-1)},
    \end{equation}
    as we also found above. The computations in this example show that 
    we cannot in general replace the factor $1/2$ in the bound by a larger 
    constant, and certainly not by $1$. The lower bound is also shown in Figure 
    \ref{fig:lowerbound}.
\end{ex}

\subsection{Proof of Proposition \ref{prop:lowerbound}} \label{sec:proof} 
Before giving the proof, we will need a few auxiliary results.

\begin{definition} For $a,b \in \mathbb{N}_0$ and $|z| < 1/2$  
    define
    \begin{equation}
            \label{eq:Hdef}
        H(a, b, z) = \sum_{n, m \in \mathbb{N}_0}
        \frac{(a + b + 1)_{n + m + 2}}{(a + 1)_{n + 1}(b + 1)_{m + 1}}
        z^{n + m}.
    \end{equation}
    Here $(a)_n = a(a+1)\cdots(a + n - 1)$ denotes the rising Pochhammer symbol.
\end{definition}

\begin{proposition} \label{prop:Drep}
    Let $\tau = d \leftarrow \tilde{\tau} \rightarrow d$ be a base trek from 
    $d$ to $d$. Then
    {\small
    \begin{equation*}
        D(\lambda_1, \ldots, \lambda_{d}, \tau)  
        = \sum_{\alpha, \beta \in \mathcal{N}(\tilde{\tau})} 
        H\left(n(\tilde{\tau}) + \alpha_{\bullet}, m(\tilde{\tau}) + \beta_{\bullet}, \frac{\lambda_d}{2}\right) 
        {l(\tilde{\tau}) + \alpha_{\bullet} + \beta_{\bullet}
        \choose n(\tilde{\tau}) + \alpha_{\bullet}}
        \prod_{i=1}^{d-1} \left(\frac{\lambda_i}{2}\right)^{\alpha_i + \beta_i}. 
    \end{equation*}
    }
\end{proposition}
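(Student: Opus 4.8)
The plan is to exploit acyclicity to localize node $d$ at the two ends of the trek, then split the defining sum of $D$ in Definition \ref{dfn:D} into a part ranging over the inner trek $\tilde{\tau}$ and a part collecting the self-loops at $d$, and finally recognize the latter as $H$ via a single Pochhammer identity.

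First I would record the structural consequence of acyclicity. Since the nodes are in topological order and $d$ is last, node $d$ is a sink in the base graph $\mathcal{G}_0$, so it appears exactly once on each directed side of any base trek ending at $d$, namely at the two endpoints. Hence the inner trek $\tilde{\tau}$ uses only nodes in $\{1,\ldots,d-1\}$, and the self-loop counts $\alpha_d,\beta_d$ of the full trek $\tau$ can each be placed in only one way (at the left, resp.\ right, copy of $d$). This yields a bijection $\mathcal{N}(\tau) \cong \mathcal{N}(\tilde{\tau}) \times \mathbb{N}_0 \times \mathbb{N}_0$, where the extra coordinates are $\alpha_d,\beta_d \ge 0$ and every $(\alpha,\beta)\in\mathcal{N}(\tilde{\tau})$ has $\alpha_d=\beta_d=0$. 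Combined with $n(\tau)=n(\tilde{\tau})+1$, $m(\tau)=m(\tilde{\tau})+1$ and $l(\tau)=l(\tilde{\tau})+2$, this is the only combinatorial input required, and in the acyclic case $\rho(\tau,\alpha,\beta)=1$ throughout so no multiplicities intervene.

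Next I would substitute this splitting into Definition \ref{dfn:D}. Fixing $(\alpha,\beta)\in\mathcal{N}(\tilde{\tau})$ and writing $a=n(\tilde{\tau})+\alpha_{\bullet}$, $b=m(\tilde{\tau})+\beta_{\bullet}$ (so that $a+b=l(\tilde{\tau})+\alpha_{\bullet}+\beta_{\bullet}$), the binomial coefficient of the full trek becomes $\binom{a+b+2+\alpha_d+\beta_d}{a+1+\alpha_d}$ and the product factors as $(\lambda_d/2)^{\alpha_d+\beta_d}\prod_{i=1}^{d-1}(\lambda_i/2)^{\alpha_i+\beta_i}$. The sum over $\mathcal{N}(\tau)$ therefore separates into an outer sum over $\mathcal{N}(\tilde{\tau})$ times the inner sum $\sum_{n,m\ge 0}\binom{a+b+2+n+m}{a+1+n}(\lambda_d/2)^{n+m}$, with the leftover outer factor $\binom{a+b}{a}\prod_{i=1}^{d-1}(\lambda_i/2)^{\alpha_i+\beta_i}$ exactly matching $\binom{l(\tilde{\tau})+\alpha_{\bullet}+\beta_{\bullet}}{n(\tilde{\tau})+\alpha_{\bullet}}\prod_{i=1}^{d-1}(\lambda_i/2)^{\alpha_i+\beta_i}$ in the claimed formula.

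It then remains to identify the inner sum with $\binom{a+b}{a}\,H(a,b,\lambda_d/2)$, i.e.\ to prove the term-by-term identity $\binom{a+b}{a}\frac{(a+b+1)_{n+m+2}}{(a+1)_{n+1}(b+1)_{m+1}}=\binom{a+b+2+n+m}{a+1+n}$. Converting each rising factorial to a ratio of ordinary factorials shows both sides equal $\frac{(a+b+n+m+2)!}{(a+n+1)!\,(b+m+1)!}$, so the identity is immediate and holds for every $(n,m)$; no further rearrangement of the inner series is needed. I expect the only genuine care to lie in the bookkeeping of the first two steps: verifying the bijection $\mathcal{N}(\tau)\cong\mathcal{N}(\tilde{\tau})\times\mathbb{N}_0^2$, checking that $a$ and $b$ as defined match the arguments of $H$, and confirming that the reindexing of the absolutely convergent double series (legitimate since $|\lambda_d/2|<1/2$ on $(-1,1)$, as in the domain of $H$ in \eqref{eq:Hdef}) is valid. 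The Pochhammer algebra itself is routine.
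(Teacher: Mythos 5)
Your proposal is correct and takes essentially the same route as the paper's proof: both split the sum defining $D$ into an outer sum over $\mathcal{N}(\tilde{\tau})$ and an inner sum over the self-loop counts $\alpha_d, \beta_d$ at node $d$, set $a = n(\tilde{\tau}) + \alpha_{\bullet}$ and $b = m(\tilde{\tau}) + \beta_{\bullet}$, and reduce the key step to the factorial identity $\binom{a+b}{a}\frac{(a+b+1)_{n+m+2}}{(a+1)_{n+1}(b+1)_{m+1}} = \binom{a+b+n+m+2}{a+n+1}$ before recognizing the inner series as $H(a,b,\lambda_d/2)$. Your explicit check that $d$ is a sink and hence $\mathcal{N}(\tau) \cong \mathcal{N}(\tilde{\tau}) \times \mathbb{N}_0^2$ only spells out bookkeeping the paper leaves implicit.
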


\begin{proof} By Definition \ref{dfn:D} we have
\begin{align*}
    D(\lambda_1, \ldots, \lambda_{d}, \tau)  & = 
    \sum_{\alpha, \beta \in \mathcal{N}(\tau)}
    {l(\tau) + \alpha_{\bullet} + \beta_{\bullet} \choose n(\tau) + \alpha_{\bullet}}
    \prod_{i=1}^d \left(\frac{\lambda_i}{2}\right)^{\alpha_i + \beta_i} \\
    & = \sum_{\alpha, \beta \in \mathcal{N}(\tilde{\tau})} \sum_{\alpha_d, \beta_d \in \mathbb{N}_0}
    {l(\tilde{\tau}) + \alpha_{\bullet} + \beta_{\bullet} + \alpha_d + \beta_d + 2
    \choose n(\tilde{\tau}) + \alpha_{\bullet} + \alpha_d + 1}
    \prod_{i=1}^{d} \left(\frac{\lambda_i}{2}\right)^{\alpha_i + \beta_i}. 
\end{align*}
Recall that $l(\tilde{\tau}) = n(\tilde{\tau}) + m(\tilde{\tau})$, so with $a = n(\tilde{\tau}) + \alpha_\bullet$ 
and $b = m(\tilde{\tau}) + \beta_\bullet$ we have the following identity for the 
binomial coefficient 
\begin{align*}
    {l(\tilde{\tau}) + \alpha_{\bullet} + \beta_{\bullet} + \alpha_d + \beta_d + 2
    \choose n(\tilde{\tau}) + \alpha_{\bullet} + \alpha_d + 1} & = 
    {a + b + \alpha_d + \beta_d + 2 \choose a + \alpha_d + 1} \\
    & = \frac{(a + b + \alpha_d + \beta_d + 2)!}{(a + \alpha_d + 1)!(b + \beta_d + 1)!} \\
    & = \frac{(a + b + 1)_{\alpha_d + \beta_d + 2}}{(a + 1)_{\alpha_d + 1}(b + 1)_{\beta_d + 1}}
    \frac{(a + b)!}{a!b!} \\
    & = \frac{(a + b + 1)_{\alpha_d + \beta_d + 2}}{(a + 1)_{\alpha_d + 1}(b + 1)_{\beta_d + 1}}
    {l(\tilde{\tau}) + \alpha_{\bullet} + \beta_{\bullet}
    \choose n(\tilde{\tau}) + \alpha_{\bullet}}.
\end{align*}
Plugging this into the sum above and using the definition of $H$ in \eqref{eq:Hdef} gives
{\small 
\begin{align*}
    & D(\lambda_1, \ldots, \lambda_{d}, \tau)  \\ & = 
    \sum_{\alpha, \beta \in \mathcal{N}(\tilde{\tau})} \sum_{\alpha_d, \beta_d \in \mathbb{N}_0}
    \frac{(a + b + 1)_{\alpha_d + \beta_d + 2}}{(a + 1)_{\alpha_d + 1}(b + 1)_{\beta_d + 1}} \left(\frac{\lambda_d}{2}\right)^{\alpha_d + \beta_d}
    {l(\tilde{\tau}) + \alpha_{\bullet} + \beta_{\bullet}
    \choose n(\tilde{\tau}) + \alpha_{\bullet}} 
    \prod_{i=1}^{d-1} \left(\frac{\lambda_i}{2}\right)^{\alpha_i + \beta_i}
    \\
    & = 
    \sum_{\alpha, \beta \in \mathcal{N}(\tilde{\tau})} 
    H\left(n(\tilde{\tau}) + \alpha_{\bullet}, m(\tilde{\tau}) + \beta_{\bullet}, \frac{\lambda_d}{2}\right) 
    {l(\tilde{\tau}) + \alpha_{\bullet} + \beta_{\bullet}
    \choose n(\tilde{\tau}) + \alpha_{\bullet}}
    \prod_{i=1}^{d-1} \left(\frac{\lambda_i}{2}\right)^{\alpha_i + \beta_i}. 
\end{align*}
}
\end{proof}

\begin{lemma} \label{lem:2bound}
    For $a,b \in \mathbb{N}_0$ and $z \in [0, 1/2)$ we have
    \begin{equation}
        H(a, b, z) \geq 2
    \end{equation}    
\end{lemma}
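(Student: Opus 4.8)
The plan is to bound $H(a,b,z)$ from below by a single term of the defining series \eqref{eq:Hdef}, namely the $n=m=0$ term, and then verify the resulting elementary inequality. The point is that discarding every other term costs nothing for the purpose of a lower bound, because all terms have the same sign.

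First I would observe that for $z \geq 0$ (in particular for $z \in [0,1/2)$) every summand in \eqref{eq:Hdef} is nonnegative: the rising Pochhammer symbols $(a+b+1)_{n+m+2}$, $(a+1)_{n+1}$ and $(b+1)_{m+1}$ are finite products of positive integers, and $z^{n+m} \geq 0$. Consequently $H(a,b,z)$ is at least the value of the term indexed by $n=m=0$, regardless of any convergence subtleties, so
\[
    H(a,b,z) \;\geq\; \frac{(a+b+1)_2}{(a+1)_1\,(b+1)_1} \;=\; \frac{(a+b+1)(a+b+2)}{(a+1)(b+1)}.
\]
It then remains to show that this leading term is at least $2$, i.e.\ that $(a+b+1)(a+b+2) \geq 2(a+1)(b+1)$. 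Writing $s = a+b$ and expanding, the left-hand side is $s^2 + 3s + 2$ and the right-hand side is $2ab + 2s + 2$, so the difference equals $s^2 + s - 2ab = a^2 + b^2 + a + b$, which is nonnegative for all $a,b \in \mathbb{N}_0$. This yields $H(a,b,z) \geq 2$ and completes the argument.

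I do not expect any genuine obstacle here; the only thing worth noticing is that the single leading term already suffices, and that the bound is sharp, since $H(0,0,0) = 2$. Thus the extreme case is attained precisely when all discarded terms vanish, which is consistent with the constant $2$ being exactly what is needed to produce the factor $1/2$ in the lower bound \eqref{eq:lowerbound}.
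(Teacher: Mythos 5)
Your proof is correct and takes essentially the same route as the paper: both bound $H(a,b,z)$ below by the nonnegative $n=m=0$ term and then verify $(a+b+1)(a+b+2) \geq 2(a+1)(b+1)$, the only difference being that you expand the difference to $a^2+b^2+a+b \geq 0$ while the paper factors the ratio as $\bigl(1+\tfrac{b+1}{a+1}\bigr)\bigl(1+\tfrac{a}{b+1}\bigr)$ after assuming $b \geq a$ by symmetry. Your sharpness remark at $H(0,0,0)=2$ is a nice additional observation.
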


\begin{proof} Note that $H(a, b, z) = H(b, a, z)$, so we may assume that $b \geq a$.
    Since $z \geq 0$ all terms in the sum defining $H(a, b, z)$ are non-negative and 
    we find that
    \begin{align*}
        H(a, b, z) & = \sum_{n, m \in \mathbb{N}_0}
        \frac{(a + b + 1)_{n + m + 2}}{(a + 1)_{n + 1}(b + 1)_{m + 1}}
        z^{n + m} \\
        & \geq \frac{(a + b + 1)_{2}}{(a + 1)_{1}(b + 1)_{1}} \\
        & = \frac{(a + b + 2)(a + b + 1)}{(a + 1)(b + 1)} \\
        & = \left(1 + \frac{b + 1}{a + 1}\right)
        \left( 1 + \frac{a}{b + 1} \right)  \geq 2.
    \end{align*}
\end{proof}

\begin{proof}[Proof of Proposition \ref{prop:lowerbound}] The base treks from $d$ to $d$ are either of the form 
    $d \unEdge d$ or $d \leftarrow \tilde{\tau} \rightarrow d$ for a 
    base trek $\tilde{\tau}$ from $i$ to $j$ with $i, j < d$.
    Since $\omega(M, C, d \unEdge d) = c_{dd}$, we have
     the following representation of $\Sigma_{dd}$:
    \begin{align*}
        & \Sigma_{dd} = \frac{c_{dd}}{2} \sum_{\alpha_d, \beta_d \in \mathbb{N}_0} 
        {\alpha_d + \beta_d \choose \alpha_d} \left(\frac{m_{dd} + 1}{2}\right)^{\alpha_d + \beta_d} \\
        &  + 
        \underbrace{\sum_{i = 1}^{d - 1} \sum_{j= 1}^{d-1} \sum_{\tilde{\tau} \in \mathcal{T}_0(i, j)} 
        2^{-l(\tilde{\tau}) - 2 - 1} D(m_{11} + 1, \ldots, m_{dd} + 1, d \leftarrow \tilde{\tau} \rightarrow d)
        \omega(M, C, d \leftarrow \tilde{\tau} \rightarrow d)}_{= R}.
    \end{align*}

    Using the negative binomial and geometric series, the sum in the first term equals
    \begin{align*}
        \sum_{\alpha_d, \beta_d \in \mathbb{N}_0} 
        {\alpha_d + \beta_d \choose \alpha_d} \left(\frac{m_{dd} + 1}{2}\right)^{\alpha_d + \beta_d}
        & = \sum_{\beta_d \in \mathbb{N}_0} \left(\frac{m_{dd} + 1}{2}\right)^{\beta_d}   
        \sum_{\alpha_d \in \mathbb{N}_0}   {\alpha_d + \beta_d \choose \alpha_d} \left(\frac{m_{dd} + 1}{2}\right)^{\alpha_d} \\
        & = \sum_{\beta_d \in \mathbb{N}_0} \left(\frac{m_{dd} + 1}{2}\right)^{\beta_d}
        \left(1 - \frac{m_{dd} + 1}{2}\right)^{- \beta_d - 1}
        \\
        & = \frac{2}{1 - m_{dd}} \sum_{\beta_d \in \mathbb{N}_0} \left(\frac{1 + m_{dd}}{1 - m_{dd}}\right)^{\beta_d}
        \\
        & =  \frac{2}{(1 - m_{dd})} \frac{1}{\left(1 - \frac{1 + m_{dd}}{1 - m_{dd}}\right)} \\
        & = - \frac{1}{m_{dd}}. 
    \end{align*}

    For the second term $R$, we first note that for $\tilde{\tau} \in \mathcal{T}_0(i, j)$,
    \begin{align*}
        \omega(M, C, d \leftarrow \tilde{\tau} \rightarrow d) & = 
        m_{di} m_{dj} \omega(M, C, \tilde{\tau}). 
    \end{align*}

    By Proposition \ref{prop:Drep} and Lemma \ref{lem:2bound} we have 
    \begin{align*}
        D(m_{11} + 1, \ldots, m_{dd} + 1, d \leftarrow \tilde{\tau} \rightarrow d) & \geq 
        2 \sum_{\alpha, \beta \in \mathcal{N}(\tilde{\tau})} 
        {l(\tilde{\tau}) + \alpha_{\bullet} + \beta_{\bullet}
        \choose n(\tilde{\tau}) + \alpha_{\bullet}}
        \prod_{k=1}^{d-1} \left(\frac{m_{kk} + 1}{2}\right)^{\alpha_k + \beta_k} \\
        & = 2  D(m_{11} + 1, \ldots, m_{dd} + 1, \tilde{\tau}).
    \end{align*}
    As $m_{ij} \geq 0$ for $j < i$, $\omega(M, C, \tilde{\tau})  \geq 0$ for a base trek $\tilde{\tau}$, 
    and using the above inequality within the sum in the second term $R$ gives
    \begin{align*}
        R & \geq \frac{1}{2} \sum_{i = 1}^{d - 1}  \sum_{j= 1}^{d-1}  m_{di} m_{dj}
        \sum_{\tilde{\tau} \in \mathcal{T}_0(i, j)}
        2^{-l(\tilde{\tau}) - 1} D(m_{11} + 1, \ldots, m_{dd} + 1, \tilde{\tau}) 
        \omega(M, C, \tilde{\tau}) \\
        & = \frac{1}{2} \sum_{i = 1}^{d - 1}  \sum_{j= 1}^{d-1}  m_{di} m_{dj}
        \Sigma_{ij} = \frac{1}{2} M_{d1} \bSigma_{11} (M_{d1})^T.
    \end{align*}
    Combining these results gives 
    \begin{align*}
        \Sigma_{dd} & = - \frac{c_{dd}}{2m_{dd}} + R
        \geq  - \frac{c_{dd}}{2 m_{dd}}  + 
        \frac{1}{2} M_{d1} \bSigma_{11} (M_{d1})^T.
    \end{align*}
\end{proof}

\begin{rem} The proof of Proposition \ref{prop:lowerbound} shows that we can always write 
    \[
        \Sigma_{dd} = - \frac{c_{dd}}{2 m_{dd}} + R,
    \]
    which is directly comparable to \eqref{eq:LAN2} for the linear additive noise model.
    The general expression for $R$ is somewhat complicated, but the proof above 
    shows that it can be lower bounded by a simpler expression whenever $m_{ij} \geq 0$.
    It is an open question if other assumptions lead to either bounds or simplifications of $R$. 
\end{rem}

\section{Concluding remarks}

Trek rules are useful for linking the entries of the solution to the Lyapunov
equation to graphical properties of the underlying mixed graph. A
straightforward observation is that $\Sigma_{ij} = 0$ if there is no trek from
$i$ to $j$. The general trek rule as stated in Proposition \ref{prop:trek} 
and Theorem \ref{thm:altrep} is, however,
more complicated than the well known trek rule for the linear additive noise
model, and even in the acyclic case it does not simplify completely to a
polynomial representation in general. This is due to the self-loops. 
For acyclic models the trek rule in Theorem \ref{thm:altrep} is, 
nevertheless, a polynomial in the off-diagonal entries of $M$ and the entries of $C$. 

The trek rule is not generally useful for the numerical computation of the 
solution to the Lyapunov equation, but it can be used in special cases to derive 
either explicit formulas for solutions or to derive bounds on entries of 
the solution. We have illustrated this for the acyclic 
models in Examples \ref{ex:simple} and \ref{ex:factor}, where 
much simpler polynomial trek rules are possible. As an example of a
non-trivial application of the trek rule, we derived the lower bound in
Proposition \ref{prop:lowerbound} on the marginal variance $\Sigma_{dd}$ for a
stable acyclic model with a diagonal $C$ matrix and off-diagonal drift
entries being non-negative. 

\section{Acknowledgements}

The trek rules in Proposition \ref{prop:trek} and Theorem \ref{thm:altrep}
were discovered while the author was visiting Mathias Drton at the Technical University of
Munich in November and December 2021. I am grateful to Mathias for hosting me.
More recently, Alexander Reisach mentioned to the author that lower bounds on 
the variances for acyclic models would be of interest, and the trek rule seemed 
well suited for this purpose. I am grateful to Alexander for this suggestion.
I also thank Mathias Drton, Sarah Lumpp and the three anonymous reviewers 
for a number of suggestions that improved the manuscript.
The author was supported by a research grant (NNF20OC0062897) from Novo Nordisk Fonden.

\bibliographystyle{agsm}
\bibliography{Trek_alg_stat.bib}

\end{document}